\def\N{{\mathbb N}}    
\def\Z{{\mathbb Z}}    
\def\.{{\cdot}}            
\def\<{\langle}            
\def\>{\rangle}            
\def\({\big(}              
\def\){\big)}              
\def\implies
\def\lead{\leaders\hbox to 1.5ex{\hss${.}$\hss}\hfill}
\def\arr{\hbox to 60pt{\rightarrowfill}}
\def\larr{\hbox to 60pt{\leftarrowfill}}
\newskip\aline \newskip\halfaline
\def\1{{\bf1}}
\renewcommand{\Re}{{\mathbb R}}
\newcommand{\Nat}{{\mathbb N}}
\newcommand{\C}{{\mathcal{C}}}
\newcommand{\B}{{\mathcal{B}}}
\renewcommand{\H}{{\mathcal{H}}}
\renewcommand{\N}{{\mathcal{N}}}
\newcommand{\ua}{{\uparrow}}
\newcommand{\da}{{\downarrow}}
\renewcommand{\O}{{\mathcal O}}
\newcommand{\Da}{\mathord{\mbox{\makebox[0pt][l]{\raisebox{-.4ex}
                           {$\downarrow$}}$\downarrow$}}}
\newcommand{\Ua}{\mathord{\mbox{\makebox[0pt][l]{\raisebox{.4ex}
                           {$\uparrow$}}$\uparrow$}}}
\renewcommand{\Pr}{\textsf{Prob}}
\renewcommand{\Pr}{\textsf{Prob}}
\begin{document}
\frontmatter
  \title{From Haar to Lebesgue via Domain Theory\\[1ex]
\emph{Revised Version}$^1$}
  \titlerunning{Haar to Lebesgue\ldots}
\author{Will Brian and Michael Mislove}
\authorrunning{Mislove}
\institute{Department of Mathematics\\Tulane University\\ New Orleans, LA 70118\\
\email{wbrian@tulane.edu, mislove@tulane.edu}}
\maketitle
\begin{abstract}
If $C \simeq 2^\Nat$ is the Cantor set realized as the infinite product of two-point groups, then a folklore result says the Cantor map from $\C$ into $[0,1]$ sends Haar measure to Lebesgue measure on the interval. In fact, $\C$ admits many distinct topological group structures. In this note, we show that the Haar measures induced by these distinct group structures all share this property. We prove this by showing that Haar measure for any group structure is the same as Haar measure induced by a related abelian group structure. Moreover, each abelian group structure on $\C$ supports a natural total order that determines a map onto the unit interval that is monotone, and hence sends intervals in $\C$ to subintervals of the unit interval. Using techniques from domain theory, we show this implies this map sends Haar measure on $\C$ to Lebesgue measure on the interval, and we then use this to contract a Borel isomorphism between any two group structures on $\C$.   \smallbreak
\noindent\textsl{Key words:} 
Cantor set, Cantor map, compact group, Haar measure, Lebesgue measure, Stone duality
\end{abstract}
\setcounter{footnote}{1}
\footnotetext{The original version of this paper was published in Lectures Notes in Computer Science \textbf{8464} (2014), pp.~214-228. That version prominently claimed that any two group structures on the Cantor set have the same Haar measure, a result which is not true. This version corrects that error.}
\section{Introduction}
The discovery of the middle-third Cantor set in the late 1800s led to the first construction of a continuous map of the unit interval onto itself whose derivative is zero almost everywhere. Another remarkable -- in fact, folklore -- result about the Cantor set is that the restriction of the same map to the Cantor set sends Haar measure on the compact group $2^\Nat$ to Lebesgue measure on the interval \cite{wiki}. In this note  we generalize this result to any compact totally disconnected second countable infinite group. Any topological group structure on the Cantor set is the strict projective limit of finite groups, and conversely, the limit of a countable projective system of finite groups is a topological group on the Cantor set. In fact, any compact totally disconnected second countable group is either finite or a strict projective limit of a countable family of finite groups. 

Any locally compact group admits a unique (up to scalar factor) translation-invariant Borel measure called \emph{Haar measure}, and Haar measure is finite (and hence normalized to be a probability) measure iff the group is compact. For example, Haar measure on $(\Re,+)$ is Lebesgue measure, and Haar measure on any discrete group is counting measure. There are two main results in this paper: the first is that any topological group structure on the Cantor set has the same Haar measure as a corresponding abelian group on the Cantor set, and this implies there is a canonical map to the interval that sends Haar measure to Lebesgue measure. The second main result is that there is a Borel isomorphism between any two group structures on the Cantor set. To prove the first of these results, we first show that any strict projective system of finite groups can be replaced by a system of finite abelian groups, so that each of the replacement groups has the same cardinality as the corresponding group in the original projective system. Since the probability functor is continuous on compact Hausdorff spaces, Haar measure on the limit of a projective system of finite groups is the limit of the Haar measures on the finite groups.  Any two finite groups of the same cardinality have the same Haar measure, so this implies Haar measure on the limit of the projective system of finite abelian groups is the same as Haar measure on the limit of the original projective system. 

The advantage of a projective system of finite abelian groups is that each is a product of cyclic groups, which allows us to define a total order on each of these groups relative to which the projection maps from larger to smaller groups are monotone. This implies these total orders induce a complete total order on the limit, the Cantor set $\C$, and from this it follows that the natural map from $\C$ onto the unit interval is monotone and Lawson continuous, if we view $\C$ as a continuous lattice. Using domain theory, we then show that Haar measure on the Cantor set assigns the same length to each interval in $\C$ as Lebesgue measure assigns to the image of the interval under the map, which implies that the map sends Haar measure on the Cantor set to Lebesgue measure on the unit interval. 
\subsection{Outline of the Results}
Our focus is on the Cantor set $\C$, which can be defined abstractly as a second countable perfect Stone space, i.e., a compact Hausdorff perfect zero-dimensional space that has a countable base for its topology. Here \emph{perfect} means every point is a limit point; second countability implies $\C$ is the projective limit of a countable family of finite sets. We will study two additional structures with which $\C$ can be endowed:  
\begin{enumerate}
\item[(1)] The structure of a topological group -- the leading example is $\C\simeq 2^\Nat$, the infinite product of two-point groups, but like $2^\Nat$, any topological group structure on $\C$ can be realized as the strict projective limit of a countable system of finite groups and group homomorphisms,  and 
\item[(2)] A total order relative to which $\C$ is complete lattice.  
\end{enumerate}
Because the probability functor on compact Hausdorff spaces is continuous, viewing the Cantor set $\C$ as a compact group that is the strict projective limit of finite groups, $\C_n$ implies that Haar measure on $\C$ is the limit of the Haar measures on the $\C_n$s, where Haar measure on each $\C_n$ has the uniform distribution.  We show we can replace any topological group structure on $\C_n$ with an ``equivalent" abelian group structure, in the sense that the Haar measure is the same for both groups. As a finite abelian group, the replacement group structure is isomorphic to a finite product of cyclic groups, and we show that we can construct the replacement group $\C_n$ so that it satisfies $\C_n \simeq \bigoplus_{k\leq n} \Z_{a_k}$ is a direct product of $n$ finite cyclic groups. 

Since $\Z_k$ admits a natural total order for each $k$, this allows us to define the lexicographic order on $\C_n \simeq \bigoplus_{k\leq n} \Z_{a_k}$ for each $n$, and then the quotient mapping $\C_m \to \C_n$ is monotone for each $n\leq m$. These total orders therefore induce a complete total order on $\C$, which means $\C$ is a complete lattice in this order. Then the topology on $\C$ is the Lawson topology from the theory of continuous lattices. 

Applying Stone duality allows us to interpret each finite quotient $\C_n$ as a partition of $\C$ into subintervals, and then Haar measure on $\C_n$ assigns equal lengths to each of these intervals. Next, we show that there is a natural map from $\C$ to $[0,1]$ that is monotone and Lawson continuous. We show this assigns the same length to each subinterval of $\C$ determined by $\C_n$ as Lebesgue measure assigns to its image in $[0,1]$. 

The final piece of the puzzle relies on verifying that the length Haar measure on $\C$ assigns to each closed subinterval is the same as the length that Lebesgue measure assigns to its image in $[0,1]$.  Since both measures are continuous (i.e., they assign measure $0$ to points), and the clopen  (= closed and open) intervals in $\C$ map to the closed intervals in $[0,1]$, inner regularity implies these measures assign the same measure to open intervals, and it follows that the image of Haar measure on $\C$ under the natural map is Lebesgue measure on the interval. 

\subsection{The plan of the paper}
In the next section, we review some background material from domain theory and from the theory of compact abelian groups. Most of the latter is well-known, but we include some proofs for completeness sake. The treatment of domain theory includes a version of Stone duality. The following section constitutes the main part of the paper, where we analyze the Cantor set when it is equipped with an arbitrary  abelian topological group structure making it a topological group.  
\section{Background}\label{sec:background}
In this section we present the background material we need for our main results. 
\subsection{Domains}\label{subsec:domains}
Our results rely fundamentally on domain theory. Most of the results that we quote below can be found in \cite{abrjung} or \cite{comp}; we give specific references for those that are not found there. 

To start, a \emph{poset} is a partially ordered set. A poset is \emph{directed complete} if each of its directed subsets has a least upper bound; here a subset $S$ is \emph{directed} if each finite subset of $S$ has an upper bound in $S$. A directed complete partial order is called a \emph{dcpo}. 
The relevant maps between dcpos are the monotone maps that also preserve suprema of directed sets; these maps are usually called \emph{Scott continuous}. 

These notions can be presented from a purely topological perspective: a subset $U\subseteq P$ of a poset is \emph{Scott open} if (i) $U = \ua U \equiv \{ x\in P\mid (\exists u\in U) \ u\leq x\}$ is an upper set, and (ii) if $\sup S\in U$ implies $S\cap U\not=\emptyset$ for each directed subset $S\subseteq P$. It is routine to show that the family of Scott-open sets forms a topology on any poset; this topology satisfies $\da x \equiv \{y\in P\mid y\leq x\} = \overline{\{x\}}$ is the closure of a point, so the Scott topology is always $T_0$, but it is $T_1$ iff $P$ is a flat poset.\footnote{A space $X$ is $T_0$ if given any pair of points, there is an open set containing exactly one of the points; $X$ is $T_1$ if $\{x\}$ is a closed set for each $x\in X$.}   A mapping between dcpos is Scott continuous in the order-theoretic sense iff it is a monotone map that is continuous with respect to the Scott topologies on its domain and range. 

If $P$ is a dcpo, and $x,y\in P$, then \emph{$x$ approximates $y$} iff for every directed set $S\subseteq P$, if $y\leq \sup S$, then there is some $s\in S$ with $x\leq s$. In this case, we write $x\ll y$ and we let $\Da y = \{x\in P\mid x\ll y\}$. A \emph{basis} for a poset $P$ is a family $B\subseteq P$ satisfying $\Da y\cap B$ is directed and $y = \sup (\Da y\cap B)$ for each $y\in P$. A \emph{continuous poset} is one that has a basis, and a dcpo $P$ is a \emph{domain} if $P$ is a continuous dcpo. An element $k\in P$ is \emph{compact} if $x\ll x$, and $P$ is \emph{algebraic} if $KP = \{ k\in P\mid k\ll k\}$ forms a basis. Domains are sober spaces in the Scott topology. 

Domains also have a Hausdorff refinement of the Scott topology which will play a role in our work. The \emph{weak lower topology} on $P$ has the sets of the form if $O = P\setminus \ua F$ as a basis, where $F\subset P$ is a finite subset. The \emph{Lawson topology} on a domain $P$ is the common refinement of the Scott- and weak lower topologies on $P$. This topology has the family 
$$\{ U\setminus\!\ua F\mid U\ \text{Scott open}\ \&\ F\subseteq P\ \text{finite}\}$$
as a basis. The Lawson topology on a domain is always Hausdorff. 

A domain is \emph{coherent} if its Lawson topology is compact. We denote the closure of a subset $X\subseteq P$ of a coherent domain in the Lawson topology by $\overline{X}^\Lambda$.

\begin{example} A basic example of a domain is the unit interval; here $x\ll y$ iff $x = 0$ or $x < y$. The Scott topology on the $[0,1]$ has open sets $[0,1]$ together with $\Ua x = (x,1]$ for $x \in (0,1]$. Since  domains are closed under finite products, $[0,1]^n$ is a domain in the product order, where $x\ll y$ iff $x_i\ll y_i$ for each $i$; a basis of Scott-open sets is formed by the sets $\Ua x$ for $x\in [0,1]^n$ (this last is true in any domain). 
 
 The Lawson topology on [0,1] has basic open sets $(x,1]\setminus [y,1]$ for $x < y$ -- i.e., sets of the form $(x,y)$ for $x < y$, which is the usual topology. Then, the Lawson topology on $[0,1]^n$ is the  product topology from the usual topology on $[0,1]$. This shows $[0,1]$ is a coherent domain. 
 
 Since $[0,1]$ has a least element, the same results apply for any power of $[0,1]$, where $x\ll y$ in $[0,1]^J$ iff $x_j = 0$ for almost all $j\in J$, and $x_j\ll y_j$ for all $j\in J$. Thus, every power of $[0,1]$ is a coherent domain. 

 Similarly, the middle-third Cantor set $\C\subseteq [0,1]$ is a domain in the order it inherits from $[0,1]$. But while $K[0,1] = \{0\}$, the compact elements of $\C$ consist of the least upper bounds of the open intervals that are deleted from $[0,1]$ to form $\C$ -- ${2\over 3}, {2\over 9}, {8\over 9}$,\ldots. Thus, $y = \sup K\C\cap \da y$ for each $y\in \C$, so $\C$ is an \emph{algebraic} domain, in fact a complete algebraic lattice. 

A more interesting example of a coherent domain is $\Pr(D)$, the family  of probability measures on a coherent domain $D$, where $\mu\leq \nu$ iff $\mu(U)\leq \nu(U)$ for every Scott-open subset $U\subseteq D$. For example,  $\Pr([0,1])$ is a coherent domain. In fact, the category \textsf{COH} of coherent domains and Scott continuous maps is closed under the application of the functor \textsf{Prob}~\cite{jungtix}.  
 \end{example}

\subsubsection{Embedding-projection Pairs}
One of the features of domain theory is its ability to provide solutions to \emph{domain equations} -- these are abstract domains that satisfy structural requirements, most often ones needed in defining models for programming language constructs. Of course, the most famous domain equation is $D\simeq [D\to D]$, which can be solved in any of the number of Cartesian closed categories of domains. We don't need anything so sophisticated, but we can use the basic approach to solving domain equations to realize Stone spaces as algebraic lattices. 

\begin{definition}\label{def:e-p-pair}
Let $P$ and $Q$ be posets. An \emph{embedding--projection pair} between $P$ and $Q$ is a pair of monotone mappings $e\colon Q\to P$ and $p\colon P\to Q$ satisfying $p\circ e = 1_Q$ and $p\circ e\leq 1_P$, where the order on functions is pointwise. 
\end{definition}

The main result we need is the following:

\begin{theorem}\label{thm:e-p-pair}
Let $(P_i,e_{i,j},p_{i.j})_{i\leq j\in I}$ be an indexed family of domains $P_i$ and Scott-continuous e--p pairs $e_{i,j}\colon P_i\to P_j$, $p_{i,j}\colon P_j\to P_i$ for $i\leq j$. Then $P = \{(x_i)_{i\in I}\mid p_{i,j}(x_j) = x_i\}$ is a domain, and the projection maps $\pi_i\colon P\to P_i$ together with the mappings $e_i\colon P_i\to P$ by $(e_i(x))_j = \begin{cases} p_{i,j}(x) & \text{if}\ i\leq j\cr
							e_{i,j}(x_j) & \text{if}\ j\leq i\cr
							\end{cases}$ form Scott-continuous e--p pairs. 
							Moreover, if each $P_i$ is algebraic, then so is $P$, and $KP = \bigcup_i e_i(KP_i)$.
\end{theorem}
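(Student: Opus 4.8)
The plan is to realize $P$ as the bilimit of the system and to drive everything through the family of deflations $d_i = e_i\circ\pi_i$. First I would check that $P$ is a dcpo. It sits inside the product $\prod_i P_i$ with the coordinatewise order, and since each $p_{i,j}$ is Scott continuous, the compatibility equations $p_{i,j}(x_j)=x_i$ are preserved under coordinatewise directed suprema; hence $P$ is a sub-dcpo of the product, with suprema computed coordinatewise. Scott continuity of the projections $\pi_i$ is then immediate, and a routine verification using the cocycle identities $e_{j,k}\circ e_{i,j}=e_{i,k}$ and $p_{i,j}\circ p_{j,k}=p_{i,k}$ shows that $e_i$ (which embeds $P_i$ into the coordinates $j\geq i$ via the $e_{i,j}$ and sends it to the coordinates $j\leq i$ via the $p$'s) actually lands in $P$ and is Scott continuous. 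The e--p identity $\pi_i\circ e_i=1_{P_i}$ holds coordinatewise, and $e_i\circ\pi_i\leq 1_P$ follows from $e_{i,j}\circ p_{i,j}\leq 1_{P_j}$ on the coordinates above $i$ and from the compatibility equations (which give equality) on the coordinates below $i$.

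Next I would record the two facts that power the continuity argument. The maps $d_i=e_i\circ\pi_i$ are Scott-continuous idempotent deflations (each $d_i\leq 1_P$); they form a directed family, since for $i\leq j$ we have $d_i=(e_j\circ e_{i,j})\circ(p_{i,j}\circ\pi_j)=e_j\circ(e_{i,j}\circ p_{i,j})\circ\pi_j\leq e_j\circ\pi_j=d_j$ using $e_i=e_j\circ e_{i,j}$ and $\pi_i=p_{i,j}\circ\pi_j$; and $\sup_i d_i=1_P$, because for $y\in P$ and any coordinate $k$, every $i\geq k$ already satisfies $(d_i(y))_k=p_{k,i}(y_i)=y_k$, so the coordinatewise supremum recovers $y$ (here I use that $I$ is directed, so the $i\geq k$ are cofinal). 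The second fact is that an embedding arising from an e--p pair preserves the way-below relation: if $a\ll b$ in $P_i$ and $S\subseteq P$ is directed with $e_i(b)\leq\sup S$, applying $\pi_i$ gives $b\leq\sup\pi_i(S)$, so $a\leq\pi_i(s)$ for some $s\in S$, whence $e_i(a)\leq e_i(\pi_i(s))\leq s$; thus $e_i(a)\ll e_i(b)$.

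With these in hand I would prove $P$ is continuous by showing that $B=\bigcup_i e_i(B_i)$ is a basis, where $B_i$ is a basis of $P_i$. For $y\in P$ set $M_y=\bigcup_i e_i(\Da\pi_i(y)\cap B_i)$. Every element of $M_y$ lies in $\Da y$, since $a\ll\pi_i(y)$ with $a\in B_i$ gives $e_i(a)\ll e_i(\pi_i(y))\leq y$; and $\sup M_y=y$, since $e_i(\pi_i(y))=\sup e_i(\Da\pi_i(y)\cap B_i)$ by Scott continuity of $e_i$ and the basis property of $B_i$, while $\sup_i e_i(\pi_i(y))=y$ by the deflation identity. The main obstacle, and the step I would treat most carefully, is directedness of $M_y$: given $e_i(a),e_j(b)\in M_y$, I choose $k\geq i,j$, transport $a,b$ into $P_k$ as $e_{i,k}(a),e_{j,k}(b)$, check these are way-below $\pi_k(y)$ (using that $e_{i,k}$ preserves $\ll$ and the compatibility $\pi_i(y)=p_{i,k}(\pi_k(y))$), dominate them by a single element $c\in\Da\pi_k(y)\cap B_k$ (directed in the domain $P_k$), and observe via $e_k\circ e_{i,k}=e_i$ that $e_k(c)\in M_y$ dominates both $e_i(a)$ and $e_j(b)$. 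Hence $M_y\subseteq\Da y$ is directed with supremum $y$, so $P$ is a continuous dcpo, i.e.\ a domain.

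Finally, for algebraicity I would take $B_i=KP_i$. Since $k\ll k$ in $P_i$ and $e_i$ preserves $\ll$, each $e_i(k)$ is compact in $P$, so $\bigcup_i e_i(KP_i)\subseteq KP$ is a basis of compact elements and $P$ is algebraic. For the reverse inclusion, if $c\in KP$ then $c=\sup M_c$ with $M_c$ directed, so $c\ll c$ forces $c\leq m$ for some $m\in M_c$; as $m\leq c$ this yields $c=m\in\bigcup_i e_i(KP_i)$, establishing $KP=\bigcup_i e_i(KP_i)$.
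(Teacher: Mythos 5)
Your proof is correct, and the comparison with the paper is moot in one respect: the paper gives no proof of this theorem at all. It is quoted as a known result, with the section's preamble pointing to the standard references (the Abramsky--Jung handbook chapter and the Compendium of Continuous Lattices and Domains). Your argument is essentially the proof found in those sources: realize $P$ as a sub-dcpo of the product with coordinatewise suprema, verify the e--p identities, show the deflations $d_i = e_i\circ\pi_i$ form a directed family with pointwise supremum $1_P$, observe that embeddings arising from e--p pairs preserve the way-below relation, and exhibit $\bigcup_i e_i(B_i)$ as a basis. Two hypotheses that the theorem's statement leaves implicit --- directedness of the index set $I$ and the cocycle identities $e_{j,k}\circ e_{i,j}=e_{i,k}$ and $p_{i,j}\circ p_{j,k}=p_{i,k}$ --- are genuinely needed (without directedness of $I$, the claim $\sup_i d_i = 1_P$ fails and $P$ need not be a domain), and you correctly surfaced both; you also correctly repaired the garbled case split in the paper's definition of $e_i$, which as printed does not typecheck. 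The only step worth making explicit is the final inference from ``each $y$ admits \emph{some} directed $M_y\subseteq\Da y$ with $\sup M_y = y$'' to the basis property proper, namely that $\Da y\cap B$ itself is directed: given $u,v\in\Da y\cap B$, each is way below $\sup M_y$, hence below some element of the directed set $M_y$, and a common bound of those two elements in $M_y\subseteq \Da y\cap B$ dominates both $u$ and $v$. That is a standard one-line lemma, so this is a presentational remark, not a gap.
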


\subsection{The \textsf{Prob} monad on \textsf{Comp}}
It is well known that the family of probability measures on a compact Hausdorff space is the object level of a functor which defines a monad on \textsf{Comp}, the category of compact Hausdorff spaces and continuous maps. As outlined in \cite{hofmis}, this monad gives rise to several related monads:
\begin{itemize}
\item On \textsf{Comp}, it associates to a compact Hausdorff space $X$ the free \emph{barycentric algebra} over $X$, the name deriving from the counit $\epsilon\colon \Pr(S) \to S$ which assigns to each measure $\mu$ on a probabilistic algebra $S$ its barycenter $\epsilon(\mu)$. 
\item On the category \textsf{CompMon} of compact monoids and continuous monoid homomorphisms, \textsf{Prob} gives rise to a monad that assigns to a compact monoid $S$ the free compact affine monoid over $S$.
\item On the category \textsf{CompGrp} of compact groups and continuous homomorphisms, \textsf{Prob} assigns to a compact group $G$ the free compact affine monoid over $G$; in this case the right adjoint sends a compact affine monoid to its group of units, as opposed to the inclusion functor, which is the right adjoint in the first two cases.
\end{itemize}
If we let \textsf{SProb}$(X)$ denote the family of subprobability measures on a compact Hausdorff space $X$, then it's routine to show that \textsf{SProb} defines monads in each of the cases just described, where the only change is that the objects now have a $0$ (i.e., they are affine structures with $0$-element, allowing one to define scalar multiples $r\cdot x$ for $r\in [0,1]$ and $x\in\textsf{SProb}(X)$, as well as affine combinations).

There is a further result we need about \textsf{Prob} which relates to its role as an endofunctor on \textsf{Comp} and its subcategories. The following result is due to Fedorchuk:
\begin{theorem}[Fedorchuk~\cite{fedorchuk}]\label{thm:fedorchuk} The functor \textsf{Prob}$\colon \textsf{Comp}\to \textsf{Comp}$ is normal; in particular, \textsf{Prob} preserves inverse limits.
\end{theorem}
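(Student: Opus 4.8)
The plan is to verify, one by one, the defining properties of a normal functor for $F=\textsf{Prob}$. Recall that $F\colon\textsf{Comp}\to\textsf{Comp}$ is \emph{normal} when it preserves the empty space and singletons, is monomorphic (carries embeddings to embeddings), is epimorphic (carries surjections to surjections), preserves the weight of infinite compacta, preserves intersections and preimages of closed sets, and is continuous in the sense of preserving limits of inverse systems; the last property is the ``in particular'' clause. Throughout I would realize $\textsf{Prob}(X)$ as the space of Radon probability measures with the weak$^*$ topology from $C(X)^*$, so that $\textsf{Prob}(X)$ is the compact convex set of positive unital functionals and $\textsf{Prob}(f)$ is pushforward $f_*\mu=\mu\circ f^{-1}$; for closed $A\subseteq X$ I would identify $\textsf{Prob}(A)$ with the closed set $\{\mu\mid\mu(A)=1\}$ of measures supported in $A$.

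I would dispose of the routine axioms first. There is no probability measure on $\emptyset$ and exactly one on a point, so $\textsf{Prob}(\emptyset)=\emptyset$ and $\textsf{Prob}$ sends a singleton to a singleton. Preimages are immediate from $f_*\mu(B)=\mu(f^{-1}(B))$: the pushforward is supported in a closed $B$ exactly when $\mu$ is supported in $f^{-1}(B)$. For intersections, the only nontrivial direction is that a measure supported in every closed $A_\alpha$ is supported in $\bigcap_\alpha A_\alpha$; here I would use inner regularity and compactness, noting that any compact subset of the open set $\bigcup_\alpha(X\setminus A_\alpha)$ is covered by finitely many null sets $X\setminus A_{\alpha_i}$ and hence is itself null.

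The substantive point-set axioms come next. For monomorphicity, an injective continuous map of compacta is a homeomorphism onto a closed subspace, so by Tietze extension the composition operator $C(Y)\to C(X)$ is surjective; this forces $f_*$ to be injective, and since $f_*$ is a continuous map from a compact space to a Hausdorff space it is automatically an embedding. For epimorphicity, a surjection $f$ makes $f^*\colon C(Y)\to C(X)$, $g\mapsto g\circ f$, an isometric linear embedding, so each $\nu\in\textsf{Prob}(Y)$, viewed as a state on $C(Y)$, extends by Hahn--Banach to a norm-one functional $\mu$ on $C(X)$ with $\mu(1)=1$; such a functional is automatically positive, so $\mu\in\textsf{Prob}(X)$ and $f_*\mu=\nu$. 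For weight, the Dirac embedding $x\mapsto\delta_x$ gives $w(X)\le w(\textsf{Prob}(X))$, while the weak$^*$ topology on the bounded set $\textsf{Prob}(X)$ has weight at most the density character of $C(X)$, which equals $w(X)$ for infinite $X$; hence $w(\textsf{Prob}(X))=w(X)$.

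Finally, for continuity I would take an inverse system $(X_\alpha,p_{\alpha\beta})$ with limit $X=\varprojlim X_\alpha$ and form the canonical comparison map $\textsf{Prob}(X)\to\varprojlim\textsf{Prob}(X_\alpha)$ induced by the pushforwards along the projections $\pi_\alpha$. It is injective because the cylinder functions $\bigcup_\alpha\pi_\alpha^*C(X_\alpha)$ form a dense subalgebra of $C(X)$ by Stone--Weierstrass, so a measure is determined by its family of marginals; it is surjective because a compatible family $(\mu_\alpha)$ defines a positive unital functional on that dense subalgebra, which extends uniquely to a measure on $X$ with the prescribed marginals. Being a continuous bijection of compact Hausdorff spaces, the comparison map is a homeomorphism. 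The hard part will be exactly this last axiom together with epimorphicity: both amount to producing a measure with prescribed pushforward or prescribed marginals, and each reduces to extending a positive unital functional from a dense subalgebra of $C(X)$ to all of $C(X)$ while keeping control of mass and positivity, which is where Riesz representation and Hahn--Banach do the decisive work.
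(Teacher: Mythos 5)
First, a point of comparison: the paper does not prove this theorem at all --- it is quoted verbatim from Fedorchuk's survey \cite{fedorchuk} as an external result, so there is no internal argument to measure yours against. What you have written is the standard functional-analytic verification of normality in Shchepin's sense for the probability functor, and as a plan it is essentially sound: the treatment of $\emptyset$, singletons, and preimages is correct; your intersection argument (inner regularity plus compactness of a subset of $\bigcup_\alpha(X\setminus A_\alpha)$) is exactly the $\tau$-smoothness of Radon measures; Tietze extension does give injectivity of pushforward along embeddings, and Hahn--Banach plus the fact that a unital norm-one functional on $C(X)$ is automatically positive does give epimorphicity; and the weight computation via $w(\textsf{Prob}(X))\leq d(C(X))=w(X)$ for infinite $X$, with the Dirac embedding for the reverse inequality, is right. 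This buys something the paper's citation does not: a self-contained proof from Riesz representation, Hahn--Banach, and Stone--Weierstrass, whereas the paper only ever applies the theorem to strict countable systems of finite spaces, where continuity is nearly immediate.

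One step in your continuity argument needs more care than you give it, though the repair uses a tool you already have. When the limit projections $\pi_\alpha\colon X\to X_\alpha$ are not surjective (and for a general inverse system in $\textsf{Comp}$ they need not be), the functional $\Lambda(g\circ\pi_\alpha)=\int g\,d\mu_\alpha$ on the cylinder algebra is not obviously well defined, norm-one, or positive: if $g\circ\pi_\alpha=g'\circ\pi_\beta$ you can only conclude that $g\circ p_{\alpha\gamma}$ and $g'\circ p_{\beta\gamma}$ agree on $\pi_\gamma(X)$, and $g\circ\pi_\alpha\geq 0$ on $X$ only forces $g\geq 0$ on $\pi_\alpha(X)$. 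You need the lemma that each member of a compatible thread is concentrated on the image: $\mu_\alpha(p_{\alpha\beta}(X_\beta))=\mu_\beta(X_\beta)=1$ for all $\beta\geq\alpha$, and since $\{p_{\alpha\beta}(X_\beta)\}_{\beta\geq\alpha}$ is a downward-directed family of closed sets with $\bigcap_{\beta\geq\alpha}p_{\alpha\beta}(X_\beta)=\pi_\alpha(X)$, your own intersection argument ($\tau$-smoothness) yields $\mu_\alpha(\pi_\alpha(X))=1$. With that, well-definedness, the estimate $|\Lambda(g\circ\pi_\alpha)|\leq\sup_{\pi_\alpha(X)}|g|=\|g\circ\pi_\alpha\|_\infty$, and positivity all go through, and the rest of your argument (Stone--Weierstrass density of the cylinder algebra, a continuous bijection of compact Hausdorff spaces being a homeomorphism) is fine. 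For the paper's application the subtlety is invisible, since all systems there are strict and the projections onto the (replaced) quotients are surjective, but a proof of the theorem as stated must address it.
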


\subsection{Stone duality}\label{subsec:stone}
In modern parlance, Marshall Stone's seminal result states that the category of Stone spaces -- compact Hausdorff totally disconnected spaces -- and continuous maps is dually equivalent to the category of Boolean algebras and Boolean algebra maps. The dual equivalence sends a Stone space to the Boolean algebra of its compact-open subsets; dually, a Boolean algebra is sent to the set of prime ideals, endowed with the hull-kernel topology. This dual equivalence was used to great effect by Abramsky~\cite{abr-dominlogform} where he showed how to extract a logic from a domain constructed using Moggi's monadic approach, so that the logic was tailor-made for the domain used to build it. 

Our approach to Stone duality is somewhat unconventional, but one that also has been utilized in recent work by Gehrke~\cite{gehrke1,gehrke2}. The idea is to realize a Stone space as a projective limit of finite spaces, a result which follows from Stone duality, as we now demonstrate.

\begin{theorem} [Stone Duality]\label{thm:stone} Each Stone space $X$ can be represented as a projective limit $X \simeq \varprojlim_{\alpha\in A} X_\alpha$, where $X_\alpha$ is a finite space. In fact, each $X_\alpha$ is a partition of $X$ into a finite cover by clopen subsets, and the projection $X \twoheadrightarrow X_\alpha$ maps each point of $X$ to the element of $X_\alpha$ containing it. 
\end{theorem}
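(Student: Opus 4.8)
The plan is to prove Stone Duality (Theorem~\ref{thm:stone}) by exhibiting the Stone space $X$ as the projective limit of its finite clopen partitions, directly and constructively, rather than by invoking the full categorical dual equivalence with Boolean algebras. The key structural fact I would rely on is that in a Stone space the clopen sets form a basis for the topology, and that $X$, being compact Hausdorff and totally disconnected, separates points by clopen sets: given distinct $x,y\in X$, there is a clopen $U$ with $x\in U$ and $y\notin U$.

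\medskip\noindent\textbf{Setting up the index category.} First I would let $A$ be the collection of all finite partitions $\mathcal{P}$ of $X$ into clopen subsets, ordered by refinement: $\mathcal{P}\leq \mathcal{Q}$ iff $\mathcal{Q}$ refines $\mathcal{P}$ (each block of $\mathcal{Q}$ is contained in a block of $\mathcal{P}$). This is a directed set, since any two finite clopen partitions $\mathcal{P},\mathcal{Q}$ have a common refinement given by the nonempty intersections $\{P\cap Q\mid P\in\mathcal{P},\,Q\in\mathcal{Q}\}$, which are again clopen. For each such $\mathcal{P}$ I set $X_{\mathcal{P}}=\mathcal{P}$, the finite set of blocks, with the discrete topology; the projection $\pi_{\mathcal{P}}\colon X\twoheadrightarrow X_{\mathcal{P}}$ sends each point to the unique block containing it. This map is continuous (preimages of blocks are clopen) and surjective (blocks are nonempty). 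When $\mathcal{P}\leq\mathcal{Q}$, the bonding map $X_{\mathcal{Q}}\to X_{\mathcal{P}}$ sends a block of $\mathcal{Q}$ to the block of $\mathcal{P}$ containing it, and these are visibly compatible with the $\pi$'s.

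\medskip\noindent\textbf{Identifying $X$ with the limit.} I would then show the induced map $\Phi\colon X\to\varprojlim_{\mathcal{P}} X_{\mathcal{P}}$, $\Phi(x)=(\pi_{\mathcal{P}}(x))_{\mathcal{P}}$, is a homeomorphism. Injectivity follows from the separation property above: distinct points lie in different blocks of some two-element clopen partition $\{U,X\setminus U\}$. Continuity is automatic from the universal property of the limit. For surjectivity I would use compactness: a coherent thread $(b_{\mathcal{P}})_{\mathcal{P}}$ in the limit determines a family of clopen blocks $\{\pi_{\mathcal{P}}^{-1}(b_{\mathcal{P}})\}$ with the finite intersection property (compatibility makes any finite subfamily have a common refinement whose corresponding block is contained in each), so by compactness of $X$ the intersection $\bigcap_{\mathcal{P}}\pi_{\mathcal{P}}^{-1}(b_{\mathcal{P}})$ is nonempty, and separation makes it a single point mapping to the given thread. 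Since $\Phi$ is a continuous bijection from a compact space to a Hausdorff space, it is a homeomorphism.

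\medskip\noindent\textbf{The main obstacle.} I expect the technical heart to be the surjectivity argument, specifically verifying that the blocks indexed by a compatible thread genuinely have the finite intersection property. The subtlety is that refinement compatibility only directly relates comparable partitions, so to intersect blocks from several incomparable partitions $\mathcal{P}_1,\dots,\mathcal{P}_n$ I must pass to a common refinement $\mathcal{Q}$, read off its selected block $b_{\mathcal{Q}}$, and check that $\pi_{\mathcal{Q}}^{-1}(b_{\mathcal{Q}})$ is contained in each $\pi_{\mathcal{P}_i}^{-1}(b_{\mathcal{P}_i})$ via the bonding maps---this is where the thread condition $p_{\mathcal{P}_i,\mathcal{Q}}(b_{\mathcal{Q}})=b_{\mathcal{P}_i}$ is used. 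Everything else is routine once the index set is correctly organized as the directed set of finite clopen partitions, which is exactly the reformulation that makes the last sentence of the theorem---that each $X_\alpha$ is literally such a partition and $\pi_\alpha$ sends a point to its containing block---hold on the nose.
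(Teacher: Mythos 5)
Your proof is correct, but it takes a genuinely different route from the paper's. The paper works on the algebraic side of the duality: it writes the Boolean algebra of clopen subsets of $X$ as the injective limit of its directed family of finite Boolean subalgebras, takes $X_\alpha$ to be the set of atoms of the $\alpha$-th subalgebra, and then concludes $X \simeq \varprojlim_{\alpha} X_\alpha$ by invoking the dual equivalence itself, i.e., the fact that the contravariant functor of Stone duality converts injective limits of Boolean algebras into projective limits of Stone spaces. Your index set is the same one in disguise---finite clopen partitions correspond exactly to finite Boolean subalgebras (take atoms one way, generate a subalgebra the other), and refinement corresponds to inclusion of subalgebras---so both arguments construct literally the same projective system. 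Where the two proofs part ways is in establishing the isomorphism $X \simeq \varprojlim_{\alpha} X_\alpha$: you prove it directly, checking that the canonical map $\Phi$ is injective (clopen separation of points), surjective (finite intersection property of the thread's blocks plus compactness), and hence a homeomorphism (continuous bijection from a compact space onto a Hausdorff space). What the paper's argument buys is brevity and a conceptual placement of the result as an immediate consequence of the classical duality theorem, which it presupposes as known; what yours buys is self-containedness---you never need the dual equivalence, prime ideals, or the hull-kernel topology, only compactness and the fact that a compact Hausdorff totally disconnected space is zero-dimensional (the one nontrivial background fact you do assume, as does the paper)---and it delivers the theorem's final clause, that each $X_\alpha$ is literally a clopen partition with $\pi_\alpha$ sending a point to its block, on the nose rather than through the identification of atoms with partition blocks. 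One small economy you could make: in the surjectivity step, nonemptiness of $\bigcap_{\mathcal{P}} \pi_{\mathcal{P}}^{-1}(b_{\mathcal{P}})$ already suffices, since any point of the intersection maps to the given thread; that the intersection is a singleton is a consequence of injectivity and need not be argued separately.
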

\begin{proof}
If $X$ is a Stone space, then $\B(X)$, the family of compact-open subsets of $X$ is a Boolean algebra. Clearly $\B(X)\simeq \varinjlim_{\alpha\in A} \B_\alpha$ is the injective limit of its family $\{\B_\alpha\mid \alpha \in A\}$ of finite Boolean subalgebras. For a given $\alpha\in A$, we let $X_\alpha$ denote the finite set of atoms of $\B_\alpha$. Then $\B_\alpha\hookrightarrow \B(X)$ implies $\B_\alpha$ is a family of clopen subsets of $X$, and the set of atoms of $\B_\alpha$ are pairwise disjoint, and their sup -- i.e., union -- is all of $X$, so $X_\alpha$ forms a partition of $X$ into clopen subsets, Thus there is a continuous surmorphism $X\twoheadrightarrow X_\alpha$ sending each element of $X$ to the unique atom in $X_\alpha$ containing it. The family $\{\B_\alpha\mid \alpha\in A\}$ is an injective system, since given $\B_\alpha$ and $\B_\beta$, the Boolean subalgebra they generate is again finite. Dually the family $\{ X_\alpha\mid \alpha\in A\}$ is a projective system, and since $\B(X)\simeq \varinjlim_{\alpha\in A} \B_\alpha$, it follows that $X\simeq \varprojlim_{\alpha\in A} X_\alpha$.
\end{proof}

We note that a corollary of this result says that it is enough to have a basis for the family of finite Boolean subalgebras of $\B(X)$ in order to realize $X$ as a projective limit of finite spaces, where by a \emph{basis}, we mean a directed family whose union generates all of $\B(X)$.

\subsection{Compact Groups}
We now recall some results about compact topological groups. We include proofs of some results that are well-known in the interest of completeness. A standard reference for group theory is \cite{Rot}, and an excellent reference for the theory of compact groups is \cite{hofmor} 

To begin, a \emph{topological group} is a $T_1$-topological space $G$ that is also a group, and for which the multiplication $\cdot \colon G\times G\to G$ and inversion $x\mapsto x^{-1}\colon G\to G$ mappings are continuous. A basic result is that all topological groups are Hausdorff spaces. A \emph{compact group} is a topological group whose topology is compact. 

We are interested in group structures on the Cantor set, which can be characterized as a metrizable perfect Stone space. That is, a \emph{Cantor set} is a compact Hausdorff zero-dimensional space that has a countable base for its topology, and in which every point is a limit point. It is well-known that any Cantor set has a base of clopen subsets. We prove a stronger result for groups on the Cantor set.

\begin{proposition} If $G$ is a compact group whose underlying space it zero dimensional, then $G$ admits a neighborhood base of the identity consisting of clopen normal subgroups. 
\end{proposition}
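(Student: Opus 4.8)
The plan is to follow the classical van Dantzig argument, adapted so that the subgroups produced are clopen and normal. Fix an arbitrary open neighborhood $U$ of the identity $e$; the goal is to exhibit a clopen normal subgroup $N$ with $e\in N\subseteq U$. Since $G$ is zero-dimensional, I first shrink $U$ to a clopen neighborhood $V$ of $e$ with $V\subseteq U$. Because $G$ is compact and $V$ is closed, $V$ is itself compact, and this is the finiteness I will exploit repeatedly.

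The first substantive step is an absorption argument. Using continuity of multiplication at each pair $(x,e)$ with $x\in V$, for each such $x$ I obtain open sets $A_x\ni x$ and $B_x\ni e$ with $A_xB_x\subseteq V$. Compactness of $V$ lets me extract a finite subcover $A_{x_1},\dots,A_{x_n}$ of $V$ and set $W=\bigcap_{i\leq n}B_{x_i}$, an open neighborhood of $e$ satisfying $VW\subseteq V$. After replacing $W$ by $W\cap W^{-1}$ and intersecting with a clopen neighborhood of $e$ (available by zero-dimensionality), I may assume $W$ is a symmetric clopen neighborhood of $e$ with $VW\subseteq V$; note $e\in V$ forces $W\subseteq V$. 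I then form $H=\bigcup_{n\geq 1}W^n$. Symmetry of $W$ together with $e\in W$ makes $H$ a subgroup, and $VW\subseteq V$ gives $W^n\subseteq V$ for all $n$ by induction, so $H\subseteq V\subseteq U$. Since $H$ contains the open set $W$, it is open, and an open subgroup is automatically closed (its complement is a union of cosets), so $H$ is a clopen subgroup lying inside $U$.

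It remains to pass from $H$ to a normal subgroup. Here compactness enters again: the cosets of the open subgroup $H$ form a disjoint open cover of $G$, so finitely many suffice, forcing $[G:H]<\infty$. Consequently $H$ has only finitely many distinct conjugates (at most $[G:H]$ of them), and the normal core $N=\bigcap_{g\in G}gHg^{-1}$ is therefore a finite intersection of clopen subgroups, each conjugate being clopen since conjugation is a homeomorphism. Thus $N$ is a clopen subgroup, it is normal by construction, and $e\in N\subseteq H\subseteq U$, which is exactly what is required.

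I expect the main obstacle to be the absorption step that forces the generated subgroup $H$ to remain inside the prescribed neighborhood: the compactness-plus-continuity extraction of $W$ with $VW\subseteq V$, and the inductive verification that every power $W^n$ stays in $V$, are where the argument has real content. The normality step is comparatively routine once finiteness of the index is in hand, and zero-dimensionality is used only to guarantee at the outset that arbitrarily small clopen neighborhoods, and hence a clopen symmetric $W$, exist.
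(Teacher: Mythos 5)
Your proof is correct, and its first half is the same van Dantzig absorption argument the paper uses: both produce a symmetric open $W$ absorbed into a compact open neighborhood ($VW\subseteq V$ in your notation, $U\cdot O\subseteq O$ in the paper's), check $W^n\subseteq V$ by induction, and note that the resulting open subgroup is automatically closed. The genuine difference is in the normality step. To see that $H$ has only finitely many conjugates, the paper views the family $\{xHx^{-1}\mid x\in G\}$ inside the Vietoris hyperspace of closed subsets of $G$, lets $G$ act on it by conjugation, and argues that the stabilizer of $H$ (which it calls the kernel of the action) is clopen, hence of finite index equal to the number of conjugates. You get the same finiteness more elementarily: the cosets of the open subgroup $H$ disjointly cover the compact group $G$, so $[G:H]<\infty$, and since $H\subseteq N_G(H)$ there are at most $[G:H]$ conjugates. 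Both proofs then conclude by taking the normal core $\bigcap_{g\in G}gHg^{-1}$, a finite intersection of clopen subgroups. Your route dispenses with the hyperspace machinery entirely and is, if anything, cleaner; the paper's action-theoretic detour buys nothing extra for this particular statement. Two minor remarks: clopenness of $W$ is never actually needed (open and symmetric suffices, precisely because open subgroups are closed), and your symmetrization should be ordered so that the final step preserves symmetry --- take a clopen $C\subseteq W$ and then $C\cap C^{-1}$, which is still clopen because inversion is a homeomorphism.
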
 
\begin{proof}
We start with a basis $\O$ of clopen neighborhoods of the identity, which exists in any Stone space. Since inversion is a homeomorphism (being its own inverse), each $O\in\O$ satisfies $O^{-1}\in \O$, so $O\cap O^{-1}\in \O$, which implies it is no loss of generality to assume that $O = O^{-1}$ for each $O\in \O$. 

Now, since multiplication is continuous and $O$ is both compact and open, $O = e\cdot O \subseteq O$ implies there is a $U\in \O$ with $U\cdot O\subseteq O$. But then $U\subseteq O$, and so $U^2\subseteq O$, and by induction, $U^n\subseteq O$ for each $n > 0$. Since $U$ is symmetric, this implies the subgroup $H_U$ that $U$ generates is a subset of $O$. And since $U$ is open, so is $H_U$  (which also implies $H_U$ is closed). 

For the claim about normal subgroups, we first recall that the family of conjugates $\H = \{xHx^{-1}\mid x\in G\}$ of a closed subgroup $H < G$ is closed in the space of closed subsets of $G$ endowed with the Vietoris topology, which is compact since $G$ is compact. Moreover, $G$ acts continuously on $\H$ by $(x,H) \mapsto xHx^{-1}\colon G\times \H\to \H$. The kernel $K = \{ x\in G\mid xHx^{-1} = H\}$ of this action is then a normal subgroup of $G$, and if $H$ is clopen, then $K$ is clopen as well. But since $G$ acts transitively on this family of conjugates, it follows that $|G/K | = |\H|$. Since $K$ is open and $G$ is compact, $G/K$ is finite, and so there are only finitely many cosets $xHx^{-1}$. Then their intersection $\bigcap_{x\in G} xHx^{-1}\subseteq H$ is a clopen normal subgroup of $G$ inside $H$. Since $G$ has a basis of clopen subgroups $H$ around $e$ by the first part, and we can refine each of these with a clopen normal subgroup by taking $\bigcap_{x\in G} xHx^{-1}$, it follows that $G$ has a basis of clopen normal subgroups around $e$.
\end{proof}

\begin{corollary}\label{cor:invlim}
Any compact zero-dimensional group is the strict projective limit\,\footnote{A projective system is \emph{strict} if the projection maps all are surjections.} of finite groups. 
\end{corollary}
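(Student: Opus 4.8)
The plan is to build the projective system directly from the clopen normal subgroups supplied by the preceding Proposition. Let $\mathcal{N}$ denote the neighborhood base of the identity consisting of clopen normal subgroups. First I would observe that each $N \in \mathcal{N}$ has finite index: since $N$ is open, its cosets form an open partition of the compact space $G$, so there are only finitely many of them, and because $N$ is normal the quotient $G/N$ is a finite group equipped with a continuous surjective quotient homomorphism $q_N \colon G \twoheadrightarrow G/N$. I would then order $\mathcal{N}$ by reverse inclusion; this is directed, because whenever $N, M \in \mathcal{N}$ the intersection $N \cap M$ is again an open neighborhood of $e$, hence contains some $K \in \mathcal{N}$. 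For $K \subseteq N$ the inclusion induces a surjective bonding homomorphism $\pi_{N,K} \colon G/K \twoheadrightarrow G/N$ sending $gK \mapsto gN$, and these maps are compatible, so $(G/N, \pi_{N,K})$ is a strict projective system of finite groups.

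Next I would define the canonical map $\Phi \colon G \to \varprojlim_{N} G/N$ by $\Phi(g) = (q_N(g))_N$. Each coordinate $q_N$ is a continuous homomorphism and the coordinates are compatible with the bonding maps, so $\Phi$ lands in the limit and is itself a continuous group homomorphism. Injectivity is straightforward: if $\Phi(g) = \Phi(h)$ then $gh^{-1} \in N$ for every $N \in \mathcal{N}$, and since $\mathcal{N}$ is a neighborhood base in the Hausdorff group $G$ we have $\bigcap \mathcal{N} = \{e\}$, forcing $g = h$.

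The hard part will be surjectivity, and this is exactly where compactness enters. Given a thread $(g_N N)_N$ in the limit, I would consider the family of clopen cosets $C_N = q_N^{-1}(g_N N)$ in $G$. Compatibility of the thread together with directedness of $\mathcal{N}$ shows that this family has the finite intersection property: for finitely many $N_1, \dots, N_k$, choose $K \in \mathcal{N}$ contained in all of them, and the coset $C_K$ is then contained in each $C_{N_i}$, so the intersection is nonempty. By compactness of $G$ the full intersection $\bigcap_N C_N$ is nonempty, and any $g$ in it satisfies $q_N(g) = g_N N$ for all $N$; that is, $\Phi(g)$ equals the given thread.

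Finally, I would conclude that $\Phi$ is a continuous bijective homomorphism from the compact group $G$ onto the Hausdorff limit, hence a homeomorphism and therefore an isomorphism of topological groups. Because each $q_N$ is surjective, every projection $\varprojlim_{N} G/N \to G/N$ from the limit is surjective as well, and the bonding maps $\pi_{N,K}$ are surjective by construction, so the representing system is strict, as required.
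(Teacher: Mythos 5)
Your proof is correct and follows essentially the same route as the paper: invoke the preceding Proposition to get a neighborhood base $\mathcal{N}$ of clopen normal subgroups, note each quotient $G/N$ is finite, and form the strict projective system over $\mathcal{N}$ ordered by reverse inclusion. The only difference is one of detail: the paper simply asserts the isomorphism $G \simeq \varprojlim_{\mathcal{N}} G/N$, whereas you verify it explicitly (injectivity via $\bigcap \mathcal{N} = \{e\}$, surjectivity via the finite intersection property and compactness, and the compact-to-Hausdorff argument for the homeomorphism), which is a welcome filling-in of what the paper leaves implicit.
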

\begin{proof}
If $G$ is compact and zero-dimensional, then $e$ has a basis $\N$ of clopen normal subgroups by the Proposition. If $N\in\N$, then since $G$ is compact, $G/N$ also is compact and the quotient map $\pi_N\colon G \to G/N$ is open. But $N\in \N$ is open, so $G/N$ is discrete, which implies there are only finitely many cosets in $G/N$, i.e., $G/N$ is finite. The family $\N$ is directed by reverse set inclusion, and for $M\subseteq N\in\N$, we let $\pi_{N,M}\colon G/M\to G/N$ be the natural projection. Then the family $(G/N,\pi_{N,M})_{M\leq N\in\N}$ forms a strict projective system of finite groups which satisfies $G\simeq \varprojlim_\N G/N$. 
\end{proof}

\begin{remark}
We also note that since any topological group is homogeneous, a topological group must satisfy the property that either every point is a limit point, or else the group is discrete. Thus, the underlying space of a compact group is either perfect or the group is finite. In particular, a topological group on a Stone space forces the space to be finite or perfect. By a \emph{Cantor group}, we mean a topological group structure on a Cantor set (which we also assume is metrizable).
\end{remark}

\subsection{Haar Measure on Cantor Groups}

\begin{definition} A Borel measure $\mu$ on a topological group $G$ is \emph{left translation invariant} if $\mu(xA) = \mu(A)$ for all $x\in G$ and all measurable sets $A\subseteq G$. 
\end{definition}
A fundamental result of topological group theory is that each locally compact group admits a left translation invariant Borel measure which is unique up to scalar constant; i.e., if $\mu$ and $\nu$ are left translation invariant measures on the locally compact group $G$, then there is a constant $c> 0$ such that $\mu(A) = c\cdot \nu(A)$ for every measurable set $A$. Any such measure is called a \emph{Haar measure}. If $G$ is compact, the measure $\mu$ is assumed to satisfy $\mu(G) = 1$, which means this measure is unique. Notice in particular that Haar measure on any discrete group is counting measure, and on a finite group, it is normalized counting measure. 

We now establish an important result we need for the main result of this section.

\begin{proposition}\label{prop:haarmap}
Let $G$ and $H$ be compact groups and let $\phi\colon G\to H$ be a continuous surmorphism. Then $\phi(\mu_G) = \mu_H$, where $\mu_G$ and $\mu_H$ are Haar measure on $G$ and $H$, respectively. 
\end{proposition}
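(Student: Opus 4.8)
The plan is to reduce everything to the uniqueness of Haar measure. Since $H$ is compact, $\mu_H$ is the \emph{unique} left translation invariant Borel probability measure on $H$. Writing $\phi(\mu_G)$ for the pushforward measure, defined on a Borel set $A\subseteq H$ by $\phi(\mu_G)(A) = \mu_G(\phi^{-1}(A))$, it therefore suffices to verify two facts: that $\phi(\mu_G)$ is a Borel probability measure on $H$, and that it is left translation invariant. Once both are in hand, uniqueness forces $\phi(\mu_G) = \mu_H$ and we are done.

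The first fact is essentially automatic. Because $\phi$ is continuous it is Borel measurable, so $\phi^{-1}(A)$ is Borel whenever $A$ is, and $\phi(\mu_G)$ is a well-defined Borel measure; it is a probability measure since $\phi(\mu_G)(H) = \mu_G(\phi^{-1}(H)) = \mu_G(G) = 1$. As $\mu_G$ is Radon and $\phi$ is continuous, the pushforward is again Radon, so no regularity subtleties arise.

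The heart of the argument is translation invariance, and this is where both surjectivity and the homomorphism property of $\phi$ are used. Fixing $y\in H$ and a Borel set $A\subseteq H$, I would first use surjectivity to pick $x\in G$ with $\phi(x) = y$, and then establish the set-theoretic identity $\phi^{-1}(yA) = x\,\phi^{-1}(A)$. Indeed, $g\in\phi^{-1}(yA)$ iff $\phi(g)\in\phi(x)A$ iff $\phi(x^{-1}g)\in A$ — using $\phi(x)^{-1}\phi(g) = \phi(x^{-1}g)$, which is exactly the homomorphism property — iff $x^{-1}g\in\phi^{-1}(A)$ iff $g\in x\,\phi^{-1}(A)$. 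Given this identity, left invariance of $\mu_G$ yields
$$\phi(\mu_G)(yA) = \mu_G\big(\phi^{-1}(yA)\big) = \mu_G\big(x\,\phi^{-1}(A)\big) = \mu_G\big(\phi^{-1}(A)\big) = \phi(\mu_G)(A),$$
so $\phi(\mu_G)$ is left translation invariant, and uniqueness of Haar measure on $H$ completes the proof.

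I expect the only step worth stating carefully — the main obstacle, such as it is — to be the identity $\phi^{-1}(yA) = x\,\phi^{-1}(A)$, since it quietly relies on $\phi$ being both a homomorphism (to pass between $\phi(x)^{-1}\phi(g)$ and $\phi(x^{-1}g)$) and surjective (so that every $y\in H$ arises as some $\phi(x)$). Everything else is a direct appeal to the defining properties of Haar measure. One could alternatively phrase the whole thing functorially, noting that $\phi(\mu_G) = \textsf{Prob}(\phi)(\mu_G)$, but the uniqueness argument above is the most transparent route.
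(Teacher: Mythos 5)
Your proof is correct and follows essentially the same route as the paper: both establish that the pushforward $\phi(\mu_G)$ is left translation invariant (using surjectivity to lift the translating element and the homomorphism property to move the translation inside $\phi^{-1}$) and then conclude by uniqueness of Haar measure on a compact group. The only difference is cosmetic — your direct elementwise verification of $\phi^{-1}(yA) = x\,\phi^{-1}(A)$ replaces the paper's slightly more roundabout argument through the kernel $K = \ker\phi$, its normality, and the $K$-saturation of $\phi^{-1}(A)$.
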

\begin{proof}
Let $K = \ker \phi$, and let  $A\subseteq G/K$ be measurable and $x\in G/K$. Since $\phi$ is a surmorphism, there is $x_0\in G$ with $\phi(x_0) = x$. Then 
\begin{eqnarray*}
\pi_K(\mu_G)(xA) &=& \mu_G(\phi^{-1}(xA)) = \mu_G(\phi^{-1}(x)\cdot \phi^{-1}(A)) = \mu_G(x_0 K\cdot \phi^{-1}(A))\\ & {\buildrel *\over =}& \mu_G(x_0\phi^{-1}(A)) = \mu_G(\phi^{-1}(A)) = \pi_K(\mu_G)(A),
\end{eqnarray*}
where ${\buildrel *\over =}$ follows from the normality of $K$ and the fact that $\phi^{-1}(A)$ is saturated with respect to $K$, and the next equality follows because $\mu_G$ is Haar measure on $G$. Thus $\phi(\mu_G)$ is a Haar measure on $H$. The result then follows by the uniqueness of Haar measure on a compact group.  
\end{proof}

The main result of this section is the following:

\begin{theorem}\label{thm:sameHaar}
If $G$ is a topological group whose underlying space is a Cantor set, then there is an abelian topological group structure on $G$ that has the same Haar measure as the original topological group structure. 
\end{theorem}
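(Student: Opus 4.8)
The plan is to realize $G$ as a strict projective limit of finite groups, to replace that system by a projective system of finite abelian groups of matching cardinalities, and then to transport the abelian structure back along a carefully chosen homeomorphism that preserves Haar measure.

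First I would invoke Corollary~\ref{cor:invlim} to write $G \simeq \varprojlim_n G_n$ as a strict projective limit of finite groups $G_n$ with surjective connecting homomorphisms $\pi_n\colon G_n \to G_{n-1}$; since $G$ is a metrizable Cantor set the index set may be taken to be $\Nat$, and by passing to a cofinal subsequence I may assume each $\pi_n$ is a proper surjection, so its kernel has size $a_n \geq 2$ and every coset (= fiber of $\pi_n$) has cardinality $a_n$. I then define the replacement abelian groups $A_n = \bigoplus_{k\leq n}\Z_{a_k}$ with the coordinate-dropping projections $\rho_n\colon A_n \to A_{n-1}$. Each $\rho_n$ is a surjective homomorphism with kernel $\Z_{a_n}$, so $|A_n| = a_1\cdots a_n = |G_n|$ and every fiber of $\rho_n$ also has cardinality $a_n$. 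Setting $A = \varprojlim_n A_n$ gives a compact abelian group whose underlying space is perfect, compact, metrizable and zero-dimensional, hence a Cantor set.

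The heart of the construction is a family of bijections $\theta_n\colon G_n \to A_n$ built by induction so that $\rho_n\circ\theta_n = \theta_{n-1}\circ\pi_n$. Given $\theta_{n-1}$, for each $y\in G_{n-1}$ the fiber $\pi_n^{-1}(y)$ and the fiber $\rho_n^{-1}(\theta_{n-1}(y))$ both have cardinality $a_n$, so I simply pick any bijection between them; assembling these over all $y$ yields $\theta_n$ with the required intertwining. Because the $\theta_n$ are compatible with the projections and are homeomorphisms of finite discrete spaces, they induce a homeomorphism $\theta\colon G \to A$ of the limits. I then transport the group operation: defining $x * y = \theta^{-1}(\theta(x) + \theta(y))$ makes $(G,*)$ a topological group on the underlying space of $G$ that is isomorphic, as a topological group, to $A$, and hence abelian.

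It remains to check that $(G,*)$ and the original group share their Haar measure. By Proposition~\ref{prop:haarmap} the limit projections $q^G_n\colon G\to G_n$ and $q^A_n\colon A\to A_n$ carry Haar measure to the uniform (normalized counting) measures: $q^G_n(\mu_G) = \mu_{G_n}$ and $q^A_n(\mu_A) = \mu_{A_n}$. A Borel probability measure on a Cantor set is determined by its values on the clopen basis consisting of the fibers $(q^A_n)^{-1}(\{a\})$, and for such a set the compatibility $q^A_n\circ\theta = \theta_n\circ q^G_n$ together with $|G_n| = |A_n|$ gives
$$\theta(\mu_G)\big((q^A_n)^{-1}(\{a\})\big) = \mu_G\big((q^G_n)^{-1}(\theta_n^{-1}(a))\big) = \tfrac{1}{|G_n|} = \tfrac{1}{|A_n|} = \mu_A\big((q^A_n)^{-1}(\{a\})\big).$$
Hence $\theta(\mu_G) = \mu_A$. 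Since $\theta$ is an isomorphism of topological groups from $(G,*)$ to $A$ it also carries the Haar measure of $(G,*)$ to $\mu_A$, and because pushforward along the homeomorphism $\theta$ is injective we conclude that the Haar measure of $(G,*)$ equals $\mu_G$.

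I expect the main obstacle to be the inductive construction of the intertwining bijections $\theta_n$: everything hinges on the fact that a quotient of finite groups has all fibers of one common size, which is exactly what lets the fibers of $\pi_n$ be matched bijectively with those of $\rho_n$ once the cardinalities $a_n$ are recorded in the cyclic factors $\Z_{a_n}$. The passage to the limit, both for the homeomorphism and for the measures, is then routine given that the probability functor preserves inverse limits (Theorem~\ref{thm:fedorchuk}) and that measures on the Cantor set are pinned down by their values on the clopen base.
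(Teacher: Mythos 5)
Your proposal is correct and follows essentially the same route as the paper: replace the finite quotients $G/N_k$ by direct sums of cyclic groups $\bigoplus_{l\leq k}\Z_{n_l}$ of matching cardinalities (indexed by kernel sizes), and then compare the Haar measures through the projective limit. Your inductive construction of the intertwining bijections $\theta_n$ and the induced homeomorphism $\theta$ actually makes explicit the step the paper compresses into ``since $G/N_k\simeq G_k$ \emph{qua} topological spaces, it follows that $G\simeq\varprojlim G_k$'' (which is not automatic for levelwise-homeomorphic systems unless the bijections commute with the bonding maps, exactly what your fiber-matching provides), and your clopen-basis identification of the pushforward measures is a sound substitute for the paper's appeal to Fedorchuk's theorem together with uniqueness of limits.
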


\begin{proof}
Since $G$ is a Cantor set, Corollary~\ref{cor:invlim} implies $G\simeq \varprojlim_k G/N_k$ of a countable chain of finite groups, where $k\leq k'$ implies $N_{k'}\subseteq N_k$. For each $k$, we define groups $G_k$ as follows:
\begin{enumerate}
\item $G_1 = \Z_{n_1}$, where $n_1 = |G/N_1|$, and 
\item for $k > 1$, $G_k = G_{k-1}\oplus \Z_{n_k}$, where $n_k = |\ker \pi_{G/N_k,G/N_{k-1}}|$.
\end{enumerate}
In short, $G_k = \oplus_{l\leq k} \Z_{n_l}$, where $n_1 = |G/N_1|$, and $n_k  =  |\ker \pi_{G/N_k,G/N_{k-1}}|$ for $k > 1$. Thus, $G_k$ is a direct product of cyclic groups, and $|G_k| = |G/N_k|$ for each $k$. Since $G_k$ and $G/N_k$ are both finite, this last implies Haar measure on $G/N_k$ is the same as Haar measure on $G_k$ for each $k$. 

Clearly there is a canonical projection $\pi_{k,k'}\colon G_k \to G_{k'}$ whenever $k' \leq k$. So we have a second strict projective system $(G_k,\pi_{k,k'})_{k'\leq k}$, and since $G/N_k\simeq G_k$ for each $k$ \emph{qua} topological spaces, it follows that $G\simeq \varprojlim_{k'\leq k} (G_k, \pi_{k',k})$, again \emph{qua} topological spaces. 

Next, Theorem~\ref{thm:fedorchuk} implies that the limit of the sequence $\{\mu_{G_k}\}_k$ is a Borel measure $\mu$ on $G\simeq \varprojlim_{k'\leq k} (G_k, \pi_{k',k})$ whose image under the quotient map $G \to G_k$ is $\mu_k$, Haar measure on $G_k$. But if $G_A$ denotes the limit of the projective system $(G_k,\pi_{k',k})_{k'\leq k}$ \emph{qua} compact abelian groups, then Proposition~\ref{prop:haarmap}  implies Haar measure on $G_A$ also has image $\mu_{G_k}$ under the quotient map $G_A\to G_k$. Since limits are unique, this implies $\mu_{G_A} = \mu$. 

Now the Haar measures $\mu_{G/N_k} = \mu_k$ on $G/N_k$ and on $G_k$ are equal by design, and Proposition~\ref{prop:haarmap} implies Haar measure $\mu_G$ on $G$ with its original  compact group structure maps to $\mu_{G/N_k}$ under the quotient map $G \to G/N_k$. Again limits are unique, so we conclude that $\mu_G = \mu_{G_A}$, the Haar measure induced on $G_A\simeq \varprojlim_{k'\leq k} (G_k, \pi_{k',k})$ \emph{qua} compact abelian group.
\end{proof}

\begin{remark}\
We note that the same result holds for general (ie., nonmetrizable) compact group structures on Stone spaces. The only thing that changes is that the group may require a directed family of finite quotients that may be uncountable. 
\end{remark}

\section{Defining an Bialgebraic Lattice Structure on $\C$}
According to the proof of Theorem~\ref{thm:sameHaar} we can assume we are given a Cantor group $\C \simeq \varprojlim (\C_n,\pi_{m,n})_{n\leq m}$ where each $\C_n = \oplus_{i\leq n} \Z_{n_i}$ is a product of $n$ cyclic groups, and the mapping $\pi_{m,n}\colon \C_m\to \C_n$ is the projection map onto the first $n$ factors of $\C_m$ for $n\leq m$. In particular, this representation relies on a fixed sequence of finite cyclic groups $\{\Z_{n_i}\mid i > 0\}$ satisfying $\C_n = \oplus_{i\leq n} \Z_{n_i}$, and without loss of generality, we can assume that $n_i > 1$ for each $i$ -- this follows from the fact that $\C$ is a perfect (hence uncountable) Stone space and each quotient group $\C_n$ is finite. 

\begin{theorem}\label{thm:totord}
$\C$ admits a total order relative to which it is a complete bialgebraic lattice\footnote{A lattice $L$ is \emph{bialgebraic} if $L$ and $L^{op}$ are both algebraic lattices.} endowed with the Lawson topology. 
\end{theorem}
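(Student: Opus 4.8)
The plan is to build the order digit-by-digit from the cyclic factors and then read the lattice structure and topology off the projective presentation. First I would equip each factor $\Z_{n_i}$ with its standard linear order $0 < 1 < \cdots < n_i-1$ and each finite quotient $\C_n = \oplus_{i\leq n}\Z_{n_i}$ with the associated lexicographic order, so that $\C_n$ is a finite chain. A one-line check shows each truncation $\pi_{m,n}\colon \C_m\to\C_n$ is monotone: if $x <_{\mathrm{lex}} y$ in $\C_m$ and $k$ is the first coordinate of disagreement, then either $k\leq n$, so $\pi_{m,n}(x)$ and $\pi_{m,n}(y)$ still first differ at $k$ in the same direction, or $k > n$, so $\pi_{m,n}(x)=\pi_{m,n}(y)$. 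Since the $\pi_{m,n}$ are just coordinate projections, as a set $\C=\varprojlim\C_n$ is the full product $\prod_i\Z_{n_i}$, and the order it inherits coordinatewise from the $\C_n$ coincides with the global lexicographic order (compare at the first coordinate of disagreement); this is a \emph{total} order.

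Next I would establish completeness directly. Given a nonempty $S\subseteq\C$, define $z$ greedily: let $z_1 = \max\{x_1\mid x\in S\}$, $S^{(1)}=\{x\in S\mid x_1=z_1\}$, and inductively $z_k = \max\{x_k\mid x\in S^{(k-1)}\}$ with $S^{(k)}=\{x\in S^{(k-1)}\mid x_k=z_k\}$, each maximum existing because the factors are finite and each $S^{(k-1)}$ is nonempty. A short argument at the first coordinate of disagreement shows $z=\sup S$; infima are dual, so $(\C,\leq)$ is a complete lattice. For the algebraic structure I would invoke Theorem~\ref{thm:e-p-pair}: zero-padding $e_{n,m}(x)=(x_1,\dots,x_n,0,0,\dots)$ together with $\pi_{m,n}$ gives Scott-continuous e--p pairs of the finite (hence algebraic) chains $\C_n$, so $\C$ is an algebraic domain with $K\C=\bigcup_n e_n(K\C_n)$ equal to the set of \emph{eventually-zero} sequences. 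This matches the intrinsic description: in a chain $x\ll y$ iff $\sup\{s\mid s<x\}<y$, so the compact elements are the least element together with the points having an immediate predecessor, i.e.\ exactly the eventually-zero sequences, and every $y$ is the directed supremum of its truncations $(y_1,\dots,y_N,0,0,\dots)$. Running the same argument for $\C^{\mathrm{op}}$ (reverse the factor orders and pad with top digits) shows $\C^{\mathrm{op}}$ is algebraic with compact elements the eventually-maximal sequences, so $\C$ is bialgebraic.

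Finally I would identify the Lawson topology with the Cantor (product) topology. The basic clopen cylinders $[a_1,\dots,a_n]=\{x\mid x_i=a_i,\ i\leq n\}$ are precisely the order intervals $[\ell,u]$ with $\ell=(a_1,\dots,a_n,0,0,\dots)$ and $u=(a_1,\dots,a_n,n_{n+1}-1,n_{n+2}-1,\dots)$; here $\ell$ is compact in $\C$ and $u$, being eventually maximal, has an immediate successor $u^+$ (unless $u$ is the top, in which case $\da u=\C$). Then $\ua\ell$ is Scott-open (as $\ell\ll\ell$) and $\da u=\C\setminus\ua u^+$ is weak-lower-open, using totality of the order, so $[\ell,u]=\ua\ell\cap\da u$ is Lawson-open; since $\ua\ell$ is weak-lower-closed and $\da u$ is Scott-closed, the same intersection is Lawson-closed. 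Hence every cylinder is Lawson-clopen and the Cantor topology is contained in the Lawson topology. As $\C$ is a continuous (indeed algebraic) complete lattice, its Lawson topology is compact Hausdorff, while the Cantor topology is Hausdorff, so the identity is a continuous bijection from a compact space onto a Hausdorff space and therefore a homeomorphism; the two topologies agree.

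I expect this last step to be the main obstacle: correctly pinning down the Lawson topology through the dictionary between compact elements, covering pairs, and clopen cylinders, and then closing the argument with compactness of the Lawson topology on a continuous lattice. By contrast, the definition of the order, the monotonicity of the projections, and the greedy completeness argument are routine.
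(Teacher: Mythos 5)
Your proposal is correct and follows essentially the same route as the paper: lexicographic orders on the finite quotients $\C_n$, zero-padding embeddings forming e--p pairs with the projections, Theorem~\ref{thm:e-p-pair} to get algebraicity, and compact-Hausdorff rigidity of the Lawson topology to identify it with the Cantor topology. The only differences are cosmetic: you prove completeness and the topology comparison (via Lawson-clopen cylinders, and in the opposite containment direction) explicitly where the paper merely asserts them, and you obtain algebraicity of $\C^{op}$ by the symmetry of reversing the factor orders rather than the paper's direct argument with immediate predecessors.
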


\begin{proof}
We first note that we can define a total order on $\C_n = \oplus_{n_i\leq n} \Z_{n_i}$ to be the lexicographic order, where we endow $\Z_{n_i}$ with its total order from $\Nat$. 

Next, the projection mapping $\pi_{m,n}\colon \C_n \to \C_m$ is monotone and clearly Scott continuous, for $n\leq m$, and we can define embeddings $\iota_{m,n}\colon \C_m\to \C_n$ by $\iota_{m,n}(x)_i = \begin{cases} x_i & \text{if}\ j\leq m\cr
0 & \text{if}\ m < j\cr
\end{cases}$, and clearly $\iota_{m.n}$ is monotone and Scott continuous. Moreover, it is clear that $\pi_{m,n}\circ \iota_{m,n} = 1_{\C_n}$ and $\iota_{m,n}\circ \pi_{m,n} \leq 1_{\C_m}$ for $n\leq m$. 

So,  we have a  system $((\C_n,\leq_n), \iota_{m,n}, \pi_{m,n})_{n\leq m}$ of e--p pairs in the category of algebraic lattices and Scott-continuous maps. By Theorem~\ref{thm:e-p-pair}, $\varprojlim ((\C_n),\leq_n), \pi_{n,m})_{n\leq m}$ is an algebraic lattice whose compact elements are the union of the images of the $\C_n$s under the natural embeddings, $\iota_n(x)_j = \begin{cases} x_j & \text{if}\ j\leq n\cr 0 & \text{if}\ n < j\cr\end{cases}$. But this is the same family of finite sets and projection maps that define the original projective system, which implies $\C$ has a total order relative to which it is an algebraic lattice. 

To see that $\C^{op}$ also is algebraic, we note that since $\C$ is totally ordered and complete, each $x\in K\C$ has a corresponding $x' = \sup (\da x\setminus\{x\}) \in K\C^{op}$. If $y \not\in K\C$ and $y < z\in \C$, then since $\C$ is algebraic, $z = \sup (\da z\cap K\C)$, so there is some $x\in K\C$ with $y < x \leq z$. But then $y\leq x'\in K\C^{op}$. It follows that $y = \inf (\ua y\cap K\C^{op})$ for $y \in \C$, so $\C^{op}$ also is algebraic. 

Finally, the Lawson topology on an algebraic lattice is compact and Hausdorff, and it is refined by the original topology on $\C$, so the two topologies agree.
\end{proof}

\begin{remark}
We note that $K\C = \bigcup_n \iota_n(\C_n)$, and the mappings $\iota_n\colon \C_n\to K\C$ are injections, so we often elide the injections $\iota_n$ and simply regard $\C_n$ as a subset of $\C$. Note as well that $\iota_n$ is a group homomorphism for each $n$, so this identification applies both order-theoretically and group theoretically.
\end{remark}
Theorem~\ref{thm:totord} allows us to define the natural map $\phi\colon \C\to [0,1]$: For each $n$, $\C_n = \oplus_{i\leq n} \Z_{n_i}$, endowed with the lexicographic order. For $x\in \C_n$, we define $\phi_n(x) = \sum_{i\leq n} {x_i\over n_1\cdot n_2\cdots n_i}$.\footnote{An intuitive way to understand $\phi_n$ for each $n$ is that $\Z_{n_1}$ divides the interval into $n_1$ subintervals, $\Z_{n_2}$ divides each of those into $n_2$ subintervals, and so on. So  $\phi_n$ maps the elements of $\C_n$ to those those rationals in $[0,1]$ that can be expressed precisely in an expansion using $n_1, n_2,\ldots$ as  successive denominators.} Then $\phi_n$ is monotone, and $n\leq m$ implies $\phi_m\circ \iota_{m,n} = \phi_n$. Thus we have a monotone mapping $\phi\colon K\C\to [0,1]$. The fundamental theorem of domain theory implies $\phi$ admits a Scott-continuous extension $\widehat{\phi}\colon \C\to [0,1]$. 

In fact, note that $\phi\colon K\C\to [0,1]$ is stictly monotone: if $x < y$, then $\phi(x) < \phi(y)$. This implies $\phi$ is one-to-one on $K\C$, and clearly its image is dense in $[0,1]$. Now, for any $s\in (0,1]$, if $x\in \C$ satisfies $\widehat{\phi}(x) < s$, then $s - \widehat{\phi}(x) > 0$, so we can choose $n > 0$ large enough so there are $x_n, y_n \in \C_n \subseteq K\C$ satisfying $x_n \leq x < y_n$ and $\phi_n(y_n) < s$. Hence $\C\setminus \widehat{\phi}^{-1}([s,1])$ is weak-lower open in $\C$, from which it follows that $\widehat{\phi}^{-1}([s,1])$ is weak-lower closed, which is to say $ \widehat{\phi}^{-1}([s,1]) = \ua z$ for some $z\in \C$. But this implies that $\widehat{\phi}$ is Lawson continuous. Since $\C$ is compact in the Lawson topology, this implies $\widehat{\phi}(\C) = [0,1]$. 

Moreover, since $[0,1]$ is connected and $\widehat{\phi}$ is monotone, it follows that $\widehat{\phi}(x') = \widehat{\phi}(x)$ for each $x\in K\C$. We summarize this discussion as

\begin{corollary}\label{cor:lawcont}
The mapping $\widehat{\phi}\colon K\C\to [0,1]$ by $\widehat{\phi}(x) = \sum_{i\leq n} {x_i\over n_1\cdot n_2\cdots n_i}$ is strictly monotone (hence injective), and it has a Lawson-continuous, monotone and surjective extension defined by $\widehat{\phi}(x) = \sup \widehat{\phi}(\da x\cap K\C)$.
Moreover, for each $x\in K\C$, $\widehat{\phi}(x') = \widehat{\phi}(x)$, where $x' = \sup (\da x\setminus\{x\})\in KC^{op}$. 
\end{corollary}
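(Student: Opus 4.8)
The plan is to verify the three assertions in turn, working with $\C$ as the algebraic lattice of Theorem~\ref{thm:totord}, so that $K\C=\bigcup_n\C_n$ and each $\C_n=\oplus_{i\le n}\Z_{n_i}$ carries the lexicographic order. For strict monotonicity of $\phi$ on $K\C$, I would fix $x<y$ in $K\C$, choose $n$ with $x,y\in\C_n$, and let $j$ be the first coordinate at which they differ, so $x_j<y_j$. The identity $\frac{n_i-1}{n_1\cdots n_i}=\frac{1}{n_1\cdots n_{i-1}}-\frac{1}{n_1\cdots n_i}$ makes the tail sum $\sum_{i>j}\frac{y_i-x_i}{n_1\cdots n_i}$ telescope, and bounding its absolute value by $\frac{1}{n_1\cdots n_j}-\frac{1}{n_1\cdots n_n}$ while noting the leading term is at least $\frac{1}{n_1\cdots n_j}$ yields $\phi_n(y)-\phi_n(x)\ge\frac{1}{n_1\cdots n_n}>0$. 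This is exactly the assertion that the mixed-radix expansion with bases $n_1,n_2,\dots$ is order-preserving, and it uses $n_i>1$ to keep the mesh positive. Strict monotonicity gives injectivity at once, and since $\frac{1}{n_1\cdots n_n}\to 0$ the image $\phi(K\C)$ is dense in $[0,1]$.

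Next I would construct and analyze the extension. As $K\C$ is a basis of the algebraic lattice $\C$ and $\da x\cap K\C$ is a chain with supremum $x$, the fundamental theorem of domain theory furnishes the unique Scott-continuous extension $\widehat\phi(x)=\sup\widehat\phi(\da x\cap K\C)$ into the dcpo $[0,1]$, and monotonicity of $\widehat\phi$ is inherited from $\phi$. For Lawson continuity it then suffices, since $\widehat\phi$ is already Scott continuous (so preimages of the Scott-closed sets $[0,s]$ are Lawson closed), to show $\widehat\phi^{-1}([s,1])$ is weak-lower closed for each $s\in(0,1]$. Given $x$ with $\widehat\phi(x)<s$, I would set $x_n=\iota_n\pi_n(x)\le x$, the truncation of $x$ to $\C_n$, and let $y_n\in\C_n$ be the lexicographic successor of $\pi_n(x)$ (which exists for large $n$, since $\widehat\phi(x)<1$ forces $\pi_n(x)\ne\top_n$ eventually). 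Because passing to the successor increases the mixed-radix value by exactly $\frac{1}{n_1\cdots n_n}$, we get $\phi_n(y_n)=\phi_n(x_n)+\frac{1}{n_1\cdots n_n}\le\widehat\phi(x)+\frac{1}{n_1\cdots n_n}<s$ once $n$ is large. Since $y_n$ strictly lex-dominates $x$ already in its first $n$ coordinates, $x<y_n$, so $x\in\C\setminus\ua y_n$, a basic weak-lower-open set every point of which maps below $s$ by monotonicity. Hence $\widehat\phi^{-1}([0,s))$ is weak-lower open and $\widehat\phi^{-1}([s,1])=\ua z$ is weak-lower closed, establishing Lawson continuity; surjectivity is then formal, since $\widehat\phi(\C)$ is the image of a Lawson-compact space, hence closed, and contains the dense set $\phi(K\C)$.

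Finally, for the collapsing identity I would fix $x\in K\C$ with companion $x'=\sup(\da x\setminus\{x\})\le x$ and argue by contradiction. If $\widehat\phi(x')<\widehat\phi(x)$, surjectivity produces $w\in\C$ with $\widehat\phi(x')<\widehat\phi(w)<\widehat\phi(x)$; monotonicity together with totality of the order forces $x'<w<x$, but $w<x$ means $w\in\da x\setminus\{x\}$, so $w\le x'$, a contradiction. Since monotonicity already gives $\widehat\phi(x')\le\widehat\phi(x)$, equality follows; this is the precise sense in which the connectedness of $[0,1]$ forces a monotone surjection to collapse each order-gap $x'<x$ of $\C$ to a single point. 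I expect the main obstacle to be the Lawson-continuity step, and within it the construction of the approximant $y_n>x$ with $\phi_n(y_n)<s$: the argument hinges on the exact ``successor adds $\tfrac{1}{n_1\cdots n_n}$'' behaviour of the mixed-radix encoding and on the mesh $\tfrac{1}{n_1\cdots n_n}\to 0$, which is exactly where the standing hypothesis $n_i>1$ is indispensable.
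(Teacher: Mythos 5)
Your proposal is correct and follows essentially the same route as the paper: strict monotonicity of the mixed-radix map on $K\C$, Scott-continuous extension via the fundamental theorem of domain theory, Lawson continuity by exhibiting approximants $x_n \leq x < y_n$ in $\C_n$ with $\phi_n(y_n) < s$ (your truncation/successor construction makes explicit what the paper merely asserts), surjectivity from Lawson compactness plus density, and the collapse $\widehat{\phi}(x') = \widehat{\phi}(x)$ from monotonicity together with surjectivity, which is precisely the paper's connectedness argument made rigorous. The only differences are that you supply details the paper elides; no change of method.
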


\section{Mapping Haar Measure to Lebesgue Measure}
We now come to the main result of the paper. Our goal is to show that there is a natural map from any Cantor group onto the unit interval that sends Haar measure to Lebesgue measure. According to Theorem~\ref{thm:sameHaar}, any compact group structure on a Cantor set has the same Haar measure as a group structure realized as the strict projective limit of a sequence of finite abelian groups, and Theorem~\ref{thm:totord} and Corollary~\ref{cor:lawcont} show there is a Lawson continuous monotone mapping of $\C$ onto the unit interval for such a group structure. We now show that this map sends Haar measure on $\C$ as an abelian group to Lebesgue measure. 

Recall that the abelian group structure satisfies $\C = \varprojlim_{n >0} (\bigoplus_{i\leq n} \Z_{k_i}, \pi_{m,n})$, where $k_i > 1$ for each $i$,  and $\pi_{m,n}\colon \C_m \to \C_n$ is the projection on the first $n$ factors, for $n\leq m$. Theorem~\ref{thm:totord} says $\C$ has a total order relative to which it is a complete bialgebraic lattice, and it is this order structure we exploit in our proof. 

\begin{remark}
 Recall that $b\in K\C$ implies $b' = \sup(\da b\setminus\{b\})\in KC^{op}$ and $\widehat{\phi}(b) = \widehat{\phi}(b')$. We need this fact because the clopen intervals in $\C$ all have the form $[a,b']$ for some $a\leq b\in K\C$. Indeed, according to Stone duality (Theorem~\ref{thm:stone}), in a representation of a Stone space as a strict projective limit of finite spaces, each finite quotient space corresponds to a partition of the space into clopen sets. If the Stone space is totally ordered and the representation is via monotone maps, then the elements of each partition are clopen intervals. In particular, if $\pi_n\colon\C\to \C_n$ is a projection map, then $\pi_n^{-1}(x) = [a,b']$ for some $a,b\in K\C$, for each $x\in \C_n$
 
 Throughout the following, we let $\mu_\C$ denote Haar measure on $\C$, and let $\lambda$ denote Lebesgue measure on $[0,1]$. 
 \end{remark}

\begin{proposition}\label{prop:Gnmap}
If $a\leq b\in \C_n$, then $\lambda(\widehat{\phi}([a,b'])) = \mu_{\C_n}([a,b]_{\C_n})$. 
\end{proposition}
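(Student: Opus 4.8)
The plan is to compute both sides explicitly and match them: first show that $\widehat{\phi}$ carries the clopen interval $[a,b']$ \emph{onto} the closed subinterval $[\phi_n(a),\phi_n(b)]$ of $[0,1]$, so that its Lebesgue measure is simply $\phi_n(b)-\phi_n(a)$, and then recognize this difference as a normalized count of points of $\C_n$. Throughout I identify $a,b\in\C_n$ with their images $\iota_n(a),\iota_n(b)\in K\C$ and write $N_n=\prod_{i\leq n}k_i=|\C_n|$, so that $\mu_{\C_n}$ is the normalized counting measure $\mu_{\C_n}(S)=|S|/N_n$.

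First I would pin down the image. Monotonicity of $\widehat{\phi}$ gives $\widehat{\phi}([a,b'])\subseteq[\widehat{\phi}(a),\widehat{\phi}(b')]$, and by Corollary~\ref{cor:lawcont} the endpoints are $\widehat{\phi}(a)=\phi_n(a)$ and $\widehat{\phi}(b')=\widehat{\phi}(b)=\phi_n(b)$; since $a$ and $b'$ both lie in $[a,b']$, these two extreme values are actually attained. For the reverse inclusion I would use that $\widehat{\phi}$ is surjective (again Corollary~\ref{cor:lawcont}) together with the fact that $\C$ is totally ordered: for $t$ in the \emph{interior} of $[\phi_n(a),\phi_n(b)]$ the fibre $\widehat{\phi}^{-1}(t)$ is order-convex and Lawson-closed, hence a subinterval $[c,d]$ of $\C$, and monotonicity forces this fibre to sit strictly between $a$ and $b'$ (were some fibre point $\leq a$ or $\geq b'$, evaluating $\widehat{\phi}$ there would violate $t>\phi_n(a)$ or $t<\phi_n(b)$). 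The two boundary values $t=\phi_n(a)$ and $t=\phi_n(b)$ are realized by $a$ and $b'$ themselves. Hence $\widehat{\phi}([a,b'])=[\phi_n(a),\phi_n(b)]$ exactly, and so $\lambda\bigl(\widehat{\phi}([a,b'])\bigr)=\phi_n(b)-\phi_n(a)$.

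It remains to identify this difference with the counting measure on $\C_n$, and the key observation is that $N_n\cdot\phi_n$ is the mixed-radix valuation
\[
 N_n\,\phi_n(x)=\sum_{i\leq n}x_i\!\!\prod_{i<j\leq n}\!\!k_j,
\]
which is precisely the rank function carrying $(\C_n,\leq_{\mathrm{lex}})$ order-isomorphically onto $\{0,1,\dots,N_n-1\}$ (the digit $x_1$ carries the largest place value, exactly matching the lexicographic comparison). Consequently $N_n\bigl(\phi_n(b)-\phi_n(a)\bigr)$ is the number of elements of $\C_n$ lying in the interval from $a$ to $b$, and dividing by $N_n=|\C_n|$ returns $\mu_{\C_n}$ of that interval. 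I expect the \emph{main obstacle} to be the endpoint bookkeeping forced by the gap structure of $\C$: because $b'\leq b$ while $\widehat{\phi}(b')=\widehat{\phi}(b)$, the clopen interval $[a,b']$ meets $\iota_n(\C_n)$ in a very specific set of points (those $x$ with $a\leq x$ and $\iota_n(x)\leq b'$), and one must verify that the resulting count is exactly $|[a,b]_{\C_n}|$ rather than one more or one fewer. This is the delicate step, since an off-by-one in how the top endpoint $b'$ is attributed to a point of $\C_n$ shifts the answer by $1/N_n$; the reconciliation rests squarely on the identity $\widehat{\phi}(b')=\widehat{\phi}(b)$ of Corollary~\ref{cor:lawcont}, which collapses the gap below $b$ and is what ties the measure of the clopen interval to the correct lattice-point count.
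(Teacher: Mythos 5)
Your computation of the left-hand side is correct and in fact more careful than the paper's own proof: you establish that $\widehat{\phi}([a,b'])$ is exactly the closed interval $[\phi_n(a),\phi_n(b)]$ (the paper simply asserts $\lambda(\widehat{\phi}([a,b']))=\widehat{\phi}(b')-\widehat{\phi}(a)$ without comment), and your identification of $N_n\phi_n$ with the mixed-radix rank function $\operatorname{rank}(x)=N_n\phi_n(x)$, an order isomorphism of $(\C_n,\leq_{\mathrm{lex}})$ onto $\{0,1,\dots,N_n-1\}$, replaces the paper's induction on $n$ by a cleaner, non-inductive argument. Up to that point the two proofs differ genuinely in method but agree in content: both reduce the claim to comparing $\phi_n(b)-\phi_n(a)$ with a normalized point count in $\C_n$.

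The step you flag as ``delicate,'' however, is not merely delicate: it is irreconcilable in the direction you hope. Your rank computation gives $N_n\bigl(\phi_n(b)-\phi_n(a)\bigr)=\operatorname{rank}(b)-\operatorname{rank}(a)$, which counts the \emph{half-open} interval $\{x\in\C_n\mid a\leq x<b\}$; the closed interval has one more element. If $[a,b]_{\C_n}$ is read literally as $\{x\in\C_n\mid a\leq x\leq b\}$, the proposition is simply false: take $a=b$ (allowed by the hypothesis $a\leq b$), where the left side is $0$ because $b'<a$ makes $[a,b']$ empty, while the right side is $1/N_n$; or take $n=1$, $k_1=2$, $a=0$, $b=1$, where the left side is $1/2$ and the closed count gives $1$. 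No further appeal to $\widehat{\phi}(b')=\widehat{\phi}(b)$ can repair this --- that identity is already spent in computing the left-hand side, and it cannot add a point to a finite count. The correct resolution is to read $[a,b]_{\C_n}$ as the half-open set $\{x\in\C_n\mid a\leq x<b\}$, equivalently the set of $x\in\C_n$ whose fibre $\pi_n^{-1}(x)$ is contained in $[a,b']$; with that reading your rank argument finishes the proof immediately, and this is also precisely the reading that the application in Theorem~\ref{thm:openmap} requires, since $\mu_\C([a,b'])=\bigl|\{x\in\C_n\mid a\leq x<b\}\bigr|/N_n$. (The paper's own induction commits the same off-by-one silently --- its claimed final equality also fails at $a=b$ under the closed reading --- so you have in effect located a defect in the statement rather than a gap you could have closed.)
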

\begin{proof} On one hand, $\lambda(\widehat{\phi}([a,b'])) = \widehat{\phi}(b') - \widehat{\phi}(a) =  \widehat{\phi}(b) - \widehat{\phi}(a)$. On the other, $\mu_{\C_n}([a,b]_{\C_n}) = {|[a,b]_{\C_n}|\over |\C_n|}$ since $\C_n$ is finite. Now $\C_n = \bigoplus_{i\leq n} \Z_{k_i}$ in the lexicographic order, and we show these two values agree by induction on $n$. Indeed, since $a \leq b \in \C_n$, we have $a = (a_1,\ldots, a_i)$ and $b = (b_1,\ldots, b_j)$ for some $i,j\leq n$, and then $\widehat{\phi}(a) = \sum_{l\leq i} {a_l\over k_1\cdots k_l}$ and $\widehat{\phi}(b) = \sum_{l\leq j} {b_l\over k_1\cdots b_l}$. By padding $a$ and $b$ with $0$s, we can assume $i=j=n$. Then
\begin{eqnarray*} 
\lambda(\widehat{\phi}([a,b'])) & = &\widehat{\phi}(b') - \widehat{\phi}(a)  = \widehat{\phi}(b) - \widehat{\phi}(a)\\ &=& \sum_{l\leq n} {b_l\over k_1\cdots b_l} - 
\sum_{l\leq n} {a_l\over k_1\cdots k_j}\\
&=& \left(\sum_{l\leq n-1} {b_l\over k_1\cdots b_l} - 
\sum_{l\leq n-1} {a_l\over k_1\cdots k_i}\right)\\ && \qquad + \left(\left|{b_n\over k_1\cdots k_n } - {a_n\over k_1\cdots k_n}\right|\right)\\
&{\buildrel \dag\over =} & {|[a^*,b^*]_{\C_{n-1}}|\over|\C_{n-1}|}  + \left(\left|{b_n\over k_1\cdots k_n } - {a_n\over k_1\cdots k_n}\right|\right)\\
& = & {|[a,b]_{\C_{n}}|\over|\C_{n}|} = \mu_\C([a,b]_{\C_n}), 
\end{eqnarray*}
where $a^* = (a_1,\ldots, a_{n-1})\leq b^*=(b_1,\ldots, b_{n-1})\in \C_{n-1}$ so that ${\buildrel \dag\over =}$ follows by induction.
\end{proof}

\begin{theorem}\label{thm:openmap}
Let $\O([0,1])$ denote the family of open subsets of $[0,1]$. Then $\lambda\colon \O([0,1])\to [0,1]$ and $\mu_\C\circ \widehat{\phi}^{-1}\colon \O([0,1])\to [0,1]$ are the same mapping. 
\end{theorem}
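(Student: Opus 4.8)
The plan is to prove the two set functions agree first on the (relatively) open intervals of $[0,1]$ and then extend to arbitrary open sets by countable additivity. Write $\nu = \mu_\C\circ\widehat{\phi}^{-1}$. Since $\widehat{\phi}$ is Lawson continuous, hence Borel measurable, $\nu$ is the pushforward of $\mu_\C$ and is a genuine Borel measure on $[0,1]$, so both $\nu$ and $\lambda$ are countably additive. Every open $U\subseteq[0,1]$ is a countable disjoint union of its connected components, each a relatively open interval, and $\widehat{\phi}^{-1}$ carries disjoint unions to disjoint unions; hence it suffices to prove $\nu(I)=\lambda(I)$ for a single relatively open interval $I$.

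The first ingredient is the clopen case. For $a\le b$ in some $\C_n$, the clopen interval $[a,b']$ equals $\pi_n^{-1}([a,b]_{\C_n})$ (as noted in the remark preceding Proposition~\ref{prop:Gnmap}), so Proposition~\ref{prop:haarmap}, applied to the continuous surmorphism $\pi_n\colon\C\to\C_n$ of compact abelian groups, gives $\mu_\C([a,b'])=\mu_{\C_n}([a,b]_{\C_n})$, and Proposition~\ref{prop:Gnmap} rewrites the right-hand side as $\lambda(\widehat{\phi}([a,b']))$. Thus $\mu_\C(J)=\lambda(\widehat{\phi}(J))$ for every clopen interval $J$ of $\C$, where by monotonicity of $\widehat{\phi}$ the image $\widehat{\phi}(J)$ is exactly the closed subinterval $[\widehat{\phi}(a),\widehat{\phi}(b)]$ of $[0,1]$.

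The crux is to realize the preimage of an open interval as an increasing union of such clopen intervals. Given $s<t$, I would use the density of $\widehat{\phi}(K\C)$ in $[0,1]$ to choose $a_k,b_k\in K\C$ with $a_k$ decreasing, $b_k$ increasing, $\widehat{\phi}(a_k)\downarrow s$ and $\widehat{\phi}(b_k)\uparrow t$, and then show $\widehat{\phi}^{-1}((s,t))=\bigcup_k[a_k,b_k']$. The inclusion $\supseteq$ is immediate; for $\subseteq$, if $\widehat{\phi}(x)\in(s,t)$ then for large $k$ we have $\widehat{\phi}(a_k)<\widehat{\phi}(x)<\widehat{\phi}(b_k)=\widehat{\phi}(b_k')$, where the last equality is Corollary~\ref{cor:lawcont}, and monotonicity of $\widehat{\phi}$, completeness of the total order (so nothing lies strictly between $b_k'$ and $b_k$), and this identity together force $a_k\le x\le b_k'$. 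Continuity of $\mu_\C$ from below, the clopen case, and $\lambda([\widehat{\phi}(a_k),\widehat{\phi}(b_k)])=\widehat{\phi}(b_k)-\widehat{\phi}(a_k)\to t-s$ then yield $\nu((s,t))=\lim_k\mu_\C([a_k,b_k'])=\lim_k\lambda([\widehat{\phi}(a_k),\widehat{\phi}(b_k)])=t-s=\lambda((s,t))$. The endpoint intervals $[0,t)$ and $(s,1]$ are treated identically, taking $a_k=\bot$ (respectively $b_k=\top$) and using $\widehat{\phi}(\bot)=0$, $\widehat{\phi}(\top)=1$. Summing over the components of an arbitrary open set by countable additivity then gives $\nu(U)=\lambda(U)$ for every open $U$, which is the claim.

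I expect the main obstacle to be the exact identification $\widehat{\phi}^{-1}((s,t))=\bigcup_k[a_k,b_k']$ in the crux step: it is here that the interplay between monotonicity of $\widehat{\phi}$, completeness of the total order on $\C$, and the collapsing identity $\widehat{\phi}(b')=\widehat{\phi}(b)$ must be combined to guarantee that no mass of $\mu_\C$ escapes the approximating clopen intervals. Everything else — the clopen computation and the passage to general open sets — is routine bookkeeping once this exhaustion is established.
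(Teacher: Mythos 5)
Your argument is correct and reaches the theorem, but by a genuinely different route than the paper's. The paper never decomposes $U$ into connected components: it takes an arbitrary open $U\subseteq[0,1]$, writes $\widehat{\phi}^{-1}(U)$ as the union of \emph{all} clopen subsets $K\subseteq\widehat{\phi}^{-1}(U)$ (a directed family, since $\C$ is a Stone space), expresses each such $K$ as a \emph{finite} disjoint union of clopen intervals $[a_i,b_i']$, proves $\mu_\C(K)=\lambda(\widehat{\phi}(K))$ from Proposition~\ref{prop:Gnmap} together with the observation that the images $\widehat{\phi}([a_i,b_i'])$ can overlap only in singletons (which are $\lambda$-null), and then passes to $U$ itself using \emph{inner regularity of both} $\lambda$ and $\mu_\C$. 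Your proof replaces that regularity argument with the component decomposition, an explicit increasing exhaustion $\widehat{\phi}^{-1}((s,t))=\bigcup_k[a_k,b_k']$, and continuity of a measure from below. The trade-off: your route needs only countable additivity, which is automatic for any Borel measure, and because you work with single intervals rather than finite unions you never need the singleton-overlap observation; the paper's route avoids your crux entirely --- the exactness of the exhaustion, which you correctly establish from totality of the order, $\widehat{\phi}(b_k')=\widehat{\phi}(b_k)$, and the absence of elements strictly between $b_k'$ and $b_k$ --- at the price of invoking regularity of $\mu_\C$. Both proofs bottom out in the same clopen-interval computation.

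One point of precision in your clopen step, where you are in fact \emph{more} careful than the paper: the paper cites Proposition~\ref{prop:Gnmap} as if it directly gave $\mu_\C([a,b'])=\lambda(\widehat{\phi}([a,b']))$, whereas that proposition is stated for $\mu_{\C_n}$; your appeal to Proposition~\ref{prop:haarmap} applied to $\pi_n$ supplies exactly the missing bridge. However, the identification $[a,b']=\pi_n^{-1}([a,b]_{\C_n})$ is off by one fiber: since $[a,b']=\{x\in\C\mid a\le x<b\}$, it equals $\pi_n^{-1}(\{y\in\C_n\mid a\le y<b\})$, while the preimage of the closed interval $[a,b]_{\C_n}$ also contains the entire fiber over $b$, which lies above $b'$. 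This discrepancy of $1/|\C_n|$ is inherited from the paper itself: the two sides of Proposition~\ref{prop:Gnmap} as literally stated differ by exactly that amount, because $\widehat{\phi}(b)-\widehat{\phi}(a)$ counts the points of $\C_n$ in the half-open interval $[a,b)$ rather than in $[a,b]$. With the half-open correction, Proposition~\ref{prop:haarmap} gives $\mu_\C([a,b'])=\widehat{\phi}(b)-\widehat{\phi}(a)=\lambda(\widehat{\phi}([a,b']))$, which is the statement that both your proof and the paper's actually require; your two off-by-one steps cancel, so your conclusion for clopen intervals is the correct one.
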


\begin{proof}
Let $U\in\O([0,1])$ be an open set. 
Since $\widehat{\phi}$ is Lawson continuous, $\widehat{\phi}^{-1}(U)$ is open in $\C$, and since $\C$ is a Stone space, it follows that $\widehat{\phi}^{-1}(U) = \bigcup \{ K\mid K\subseteq \widehat{\phi}^{-1}(U)\ \text{clopen}\}$. Now, $\widehat{\phi}$ is a continuous surjection, so $\widehat{\phi}(K)$ is compact and 
\begin{eqnarray*}
U = \widehat{\phi}(\widehat{\phi}^{-1}(U)) &= & \widehat{\phi}\left(\bigcup \{ K\mid K\subseteq \widehat{\phi}^{-1}(U)\ \text{clopen}\}\right)\\
& =& \bigcup \{ \widehat{\phi}(K)\mid K\subseteq \widehat{\phi}^{-1}(U)\ \text{clopen}\}.
\end{eqnarray*}
Next, any clopen $K\subseteq \C$ is compact, and because $K\C$ is dense, $(\exists n > 0)(\exists a_i< b_i\in \C_i)\, K = \bigcup_{i\leq n} [a_i,b_i']$.  Moreover, we can assume $[a_i,b_i']\cap [a_j,b_j'] = \emptyset$ for $i\not= j$. Then 
$$\mu_\C(K) = \sum_i \mu_\C([a_i,b_i']) = \sum_i \lambda(\widehat{\phi}([a_i,b_i'])),$$ 
the last equality following from Proposition~\ref{prop:Gnmap}. Since the intervals $[a_i,b_i']$ are pairwise disjoint, if $\widehat{\phi}([a_i,b_i'])\cap \widehat{\phi}([a_j,b_j'])\not= \emptyset$ then either $b_i' = a_j'$ or $b_j' = a_i'$. In either case, $\widehat{\phi}([a_i,b_i'])\cap \widehat{\phi}([a_j,b_j'])$ is singleton, and then since $\lambda$ is continuous,
\begin{equation}\label{eqn:one}
\mu_\C(K) = \sum_i \lambda(\widehat{\phi}([a_i,b_i'])) = \lambda(\bigcup_i \widehat{\phi}([a_i,b_i'])) = \lambda(K).
\end{equation}
Finally, since $\mu_\C$ and $\lambda$ are both inner regular, we have
\begin{eqnarray*}
\lambda(U) &=& \lambda(\widehat{\phi}(\widehat{\phi}^{-1}(U)))\\ &=&  \lambda\left(\bigcup \{ \widehat{\phi}(K)\mid K\subseteq \widehat{\phi}^{-1}(U)\ \text{clopen}\}\right)\\
&{\buildrel \dag\over =}& \bigcup\{\lambda(\widehat{\phi}(K))\mid K\subseteq \widehat{\phi}^{-1}(U)\ \text{clopen}\}\\
&{\buildrel \ddag\over =}& \bigcup\{\mu_\C(K)\mid K\subseteq \widehat{\phi}^{-1}(U)\ \text{clopen}\}\\
&{\buildrel \#\over =}& \mu_\C\left(\bigcup\{K\mid K\subseteq \widehat{\phi}^{-1}(U)\ \text{clopen}\}\right)\\ &=& \mu_\C(\widehat{\phi}^{-1}(U)). 
\end{eqnarray*}
where $\buildrel \dag\over =$ follows by the inner regularity of $\lambda$,\footnote{It is straightforward to argue that \emph{any} compact set $C\subseteq U$ is contained in $\bigcup \{ \widehat{\phi}(K)\mid K\subseteq \widehat{\phi}^{-1}(U)\ \text{clopen}\}$.} $\buildrel \ddag\over =$ follows from Equation~\ref{eqn:one}, and $\buildrel \#\over =$ follows from the inner regularity of $\mu_\C$. 
\end{proof}

\begin{corollary}\label{cor:haarmap}
If we endow $\C$ with the structure of topological group with  Haar measure $\mu_\C$, then there is a continuous mapping $\widehat{\phi}\colon \C\to [0,1]$ satisfying
$\widehat{\phi}(\mu_\C) = \lambda$.
\end{corollary}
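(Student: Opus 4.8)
The plan is to deduce this corollary almost immediately from Theorem~\ref{thm:openmap}, which carries all the analytic weight. First I would observe that $\widehat{\phi}$ is continuous in the original (Cantor) topology: by Corollary~\ref{cor:lawcont} it is Lawson continuous, and Theorem~\ref{thm:totord} shows that the Lawson topology on $\C$ coincides with its original topology. Hence $\widehat{\phi}$ is Borel measurable, so the pushforward $\widehat{\phi}(\mu_\C) = \mu_\C\circ\widehat{\phi}^{-1}$ is a well-defined Borel measure on $[0,1]$. Since $\mu_\C$ is a probability measure and $\widehat{\phi}$ is a surjection, $\widehat{\phi}(\mu_\C)$ is a Borel probability measure, as is $\lambda$. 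Thus the statement $\widehat{\phi}(\mu_\C) = \lambda$ is an equality of two Borel probability measures on $[0,1]$.

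Theorem~\ref{thm:openmap} asserts precisely that $\lambda$ and $\mu_\C\circ\widehat{\phi}^{-1}$ agree on every open subset $U\in\O([0,1])$, so the only remaining step is to upgrade agreement on open sets to agreement on all Borel sets. The plan is to invoke the standard uniqueness-of-measures argument. The open subsets of $[0,1]$ form a $\pi$-system (they are closed under finite intersection), they generate the Borel $\sigma$-algebra, and they contain $[0,1]$ itself; two finite measures that agree on such a generating $\pi$-system and on the whole space must agree on the entire $\sigma$-algebra, by Dynkin's $\pi$-$\lambda$ theorem. Applying this to the probability measures $\lambda$ and $\widehat{\phi}(\mu_\C)$ yields $\widehat{\phi}(\mu_\C)(B) = \lambda(B)$ for every Borel set $B\subseteq[0,1]$, which is exactly $\widehat{\phi}(\mu_\C) = \lambda$. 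To conclude the corollary for an \emph{arbitrary} topological group structure on $\C$, and not only the abelian one used to construct $\widehat{\phi}$, I would finally appeal to Theorem~\ref{thm:sameHaar}: the Haar measure of any group structure on $\C$ coincides with the Haar measure of the associated abelian group structure, so the identity $\widehat{\phi}(\mu_\C) = \lambda$ transfers verbatim.

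I do not expect a genuinely hard step here, since Proposition~\ref{prop:Gnmap} and Theorem~\ref{thm:openmap} already supply the substantive content, matching the two measures first on clopen intervals and then on arbitrary open sets by inner regularity. The points needing care are purely formal: confirming that $\widehat{\phi}$ is measurable so that the pushforward is defined, checking that both measures are probability measures so the uniqueness theorem applies, and verifying that the open subsets of $[0,1]$ form a generating $\pi$-system containing the whole space. If any of these merits the label of main obstacle it is the last, but it is immediate for $[0,1]$, so the argument is essentially a one-line consequence of Theorem~\ref{thm:openmap} together with a routine invocation of measure-theoretic uniqueness.
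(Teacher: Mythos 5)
Your proposal is correct and follows essentially the same route as the paper's proof: Theorem~\ref{thm:openmap} gives agreement of $\widehat{\phi}(\mu_\C)$ and $\lambda$ on open sets, and this is upgraded to equality on all Borel sets because the open sets generate the Borel $\sigma$-algebra. Your write-up is in fact slightly more careful than the paper's one-line argument, since you justify the upgrade explicitly via Dynkin's $\pi$-$\lambda$ theorem (agreement of two finite measures on a generating $\pi$-system containing the whole space), a step the paper leaves implicit, and you explicitly invoke Theorem~\ref{thm:sameHaar} to cover an arbitrary, not necessarily abelian, group structure on $\C$.
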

\begin{proof}
If $A\subseteq [0,1]$ is Borel measurable, then $\widehat{\phi}(\mu_\C)(A) = \mu_\C(\widehat{\phi}^{-1}(A)$. We have shown $\widehat{\phi}(\mu_\C)(A) = \lambda(A)$ in case $A$ is open. But since the open sets generate the Borel $\sigma$-algebra the result follows.
\end{proof}

\begin{theorem}\label{thm:allsame}
Let $\C_1$ and $\C_2$ be Cantor sets with topological group structures. Then there is aBorel isomorphism between $\C_1$ and $\C_2$. 
\end{theorem}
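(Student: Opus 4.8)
The plan is to use the measure--preserving maps to the interval furnished by Corollary~\ref{cor:haarmap} as the backbone of the construction, and then to correct for the fact that these maps are not quite bijections. For $i=1,2$, Corollary~\ref{cor:haarmap} gives a continuous surjection $\widehat{\phi}_i\colon \C_i\to[0,1]$ with $\widehat{\phi}_i(\mu_{\C_i})=\lambda$. The key structural observation is that each $\widehat{\phi}_i$ is injective away from the jumps of the order: by Corollary~\ref{cor:lawcont} we have $\widehat{\phi}_i(b)=\widehat{\phi}_i(b')$ for every $b\in K\C_i$, and since $\widehat{\phi}_i$ is strictly monotone on the dense set $K\C_i$, these pairs $(b',b)$ are the only points it collapses. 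I would first set $D_i=\widehat{\phi}_i(K\C_i)\subseteq[0,1]$, a countable set containing every collapse value, and verify that $\widehat{\phi}_i$ restricts to a bijection from $\C_i\setminus\widehat{\phi}_i^{-1}(D_i)$ onto $[0,1]\setminus D_i$, the fibers over $[0,1]\setminus D_i$ all being singletons.

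Next I would promote this bijection to a Borel isomorphism and align the two target sets. Put $D=D_1\cup D_2$ (still countable) and $N_i=\widehat{\phi}_i^{-1}(D)$. Because $\widehat{\phi}_i$ is continuous (hence Borel) and injective on the Borel set $\C_i\setminus N_i$, the Lusin--Souslin theorem (an injective Borel map between standard Borel spaces has Borel image and Borel inverse) shows that $\widehat{\phi}_i$ restricts to a Borel isomorphism $\C_i\setminus N_i\to[0,1]\setminus D$. Composing, $\psi_0=\widehat{\phi}_2^{-1}\circ\widehat{\phi}_1$ is a Borel isomorphism of the co--countable Borel sets $\C_1\setminus N_1$ and $\C_2\setminus N_2$. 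Since each $\widehat{\phi}_i$ pushes $\mu_{\C_i}$ to $\lambda$, and each discarded set is null (the $N_i$ are countable and Haar measure on an infinite compact group is non-atomic, so $\mu_{\C_i}(N_i)=0=\lambda(D)$), the map $\psi_0$ is measure--preserving between these co--null sets.

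Finally I would patch over the removed null sets. Both $N_1$ and $N_2$ are countably infinite: each contains the countably infinite set $\widehat{\phi}_i^{-1}(D_i)$ of jump endpoints and is a countable union of fibers of size at most $2$. Fix any bijection $N_1\to N_2$; as a map between countable discrete spaces it is automatically a Borel isomorphism, and it is trivially measure--preserving since both sets carry zero measure. Gluing it with $\psi_0$ along the Borel partition $\{N_1,\,\C_1\setminus N_1\}$ yields a bijection $\psi\colon\C_1\to\C_2$ that is a Borel isomorphism, and splitting $\mu_{\C_1}(\psi^{-1}(B))$ over the partition and discarding the null pieces confirms $\psi(\mu_{\C_1})=\mu_{\C_2}$.

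I expect the main obstacle to be precisely the bookkeeping around non-injectivity: the clean idea ``compose the two interval maps'' fails on the nose because each $\widehat{\phi}_i$ identifies the two endpoints of every order--jump, so the real work lies in isolating the exact countable exceptional set, invoking Lusin--Souslin to obtain Borelness of the inverse off that set, and then patching the two exceptional sets together so that the result remains both a genuine everywhere--defined bijection and measure--preserving. No single step is deep, but care is needed to upgrade the mod-$0$ isomorphism supplied by the interval maps to an honest Borel isomorphism.
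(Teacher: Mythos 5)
Your proposal is correct and follows essentially the same route as the paper: both proofs compose $\widehat{\phi}_2^{-1}\circ\widehat{\phi}_1$ after deleting the countable exceptional sets on which the interval maps fail to be injective. If anything, your write-up is more complete than the paper's own proof, which stops at a Borel isomorphism between the co-countable subsets $\C_1\setminus K\C_1'$ and $\C_2\setminus K\C_2'$ and leaves implicit both the Lusin--Souslin-type justification and the final patching of the removed countable sets into an everywhere-defined bijection, steps you carry out explicitly (along with measure preservation, which the statement does not even require).
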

\begin{proof}
By Theorem~\ref{thm:sameHaar}, we can assume that the group structures on $\C_1$ and $\C_2$ are both abelian, and then Theorem~\ref{thm:totord} and Corollary~\ref{cor:lawcont} show there are Lawson-continuous monotone mappings of $\widehat{\phi}_1\colon \C_1\to [0,1]$ and  $\widehat{\phi}_2\colon \C_2\to [0,1]$ both onto the unit interval. Since $K\C_i'$ are both countable, $\widehat{\phi}_1\colon \C_1\setminus K\C_1'\to [0,1]$ is a Borel isomorphism onto its image, as is $\widehat{\phi}_2\colon \C_2\setminus K\C_2'\to [0,1]$. Then the composition $\widehat{\phi}_2^{-1}\circ \widehat{\phi}_1\colon \C_1\setminus K\C_1'\to \C_2\setminus K\C_2'$ is a Borel isomorphism onto its image (that also is an order isomorphism).  
\end{proof}
\begin{remark}
In the last proof, we could have restricted the mappings to $\C_i\setminus(K\C_i\cup K\C_i')$ for $i=1,2$. Then the induced map $\widehat{\phi}_2^{-1}\circ \widehat{\phi}_1$ is a homeomorphism as well as an order isomorphism. On the other hand, the mappings we did use map are one-to-one, in particular on the elements of $[0,1]$ that are expressible as fractional representations using the families $\{\Z_{n_i}\mid i > 0\}$ and $\{\Z_{n'_i}\mid i > 0\}$.
\end{remark}
\section{Summary}
We have studied the topological groups structures on the Cantor set $\C$ and shown that any such structure has an ``equivalent" abelian group structure, in the sense that the Haar measures are the same. We also showed any representation of $\C$ as an abelian group admits a continuous mapping onto the unit interval sending Haar measure to Lebesgue measure. Finally, we showed there is a Borel isomorphism between any two group structures on $\C$. 

This work is the outgrowth of a talk by the second author at a Dagstuhl seminar in 2012. A final comment in that talk sketched a domain-theoretic approach to showing that Haar measure on $\C\simeq 2^\Nat$ maps to Lebesgue measure. We were inspired to look more closely at this issue because of the enthusiasm Prakash Panangaden expressed for that result. So, as a 60th birthday present, we offer this paper, and hope the recipient enjoys this presentation as well.
\medbreak 
\emph{Happy Birthday, Prakash!!}

\section*{Acknowledgements}
Both authors gratefully acknowledge the support of the US NSF during the preparation of this paper; the second author also thanks the US AFOSR for its support during the preparation of this paper. 

\bibliographystyle{plain}

\begin{thebibliography}{00}
\bibitem{abr-dominlogform} Abramsky, S., Domain theory in logical form, Annals of Pure and Applied Logic \textbf{51} (1991), pp.~1--77.
\bibitem{abrjung} Abramsky, S. and A. Jung, Domain Theory, in: Handbook of Logic in Computer Science, Clarendon Press (1994), pp.~1--168.
\bibitem{fedorchuk} Fedorchuk, V., Probability measures in topology, Russ. Math. Surv. \textbf{46} (1991), pp.~45--93.
\bibitem{comp} Gierz, G., K. H. Hofmann, J. D. Lawson, M. Mislove and D. Scott, Continuous Lattices and Domains, Cambridge University Press, 2003. 
\bibitem{gehrke1}  Gehrke, M., S. Grigorieff and J-E. Pin, Duality and equational theory of regular languages. ICALP (2) 2008, pp.~246-257
\bibitem{gehrke2} Gehrke, M., Stone duality and the recognisable languages over an algebra. CALCO 2009, pp.~236-250
\bibitem{hofmis} Hofmann, K. H. and M. Mislove, Compact affine monoids, harmonic analysis and information theory, in: Mathematical Foundations of Information Flow, AMS Symposia on Applied Mathematics \textbf{71} (2012), pp.~125--182.
\bibitem{hofmor} Hofmann, K. H. and S. Morris, The Structure Theory of Compact Groups, de Gruyter Studies in Mathematics \textbf{25}, 2nd Edition (2008), de Gruyter Publishers, 858pp.
\bibitem{jones} Jones, C., Probabilistic Nondeterminism, PhD Thesis, University of Edinburgh, (1988).
\bibitem{jungtix} Jung, A. and R. Tix, The troublesome probabilistic powerdomain, ENTCS \textbf{13} (1998) pp.~70--91.
\bibitem{mislove-t&a} Mislove, M., Topology. domain theory and theoretical computer science, Topology and Its Applications \textbf{89} (1998), pp.~3--59.
\bibitem{Rot} Rotman, J., An Introduction to the Theory of Groups, Graduate Studies in Mathematics, 4th Edition (1999), Springer-Verlag. 
\bibitem{saheb-djar} Saheb-Djarhomi, N., CPOs of measures for nondeterminism, Theoretical Computer Science \textbf{12} (1980), pp.~19--37.
\bibitem{wiki} URL: \url{http://en.wikipedia.org/wiki/Cantor_set}
\end{thebibliography}

\end{document}